\newtheorem{theorem}{Theorem}
\newtheorem{corollary}[theorem]{Corollary}
\newtheorem{lemma}{Lemma}
\newtheorem{definition}{Definition}
\title{From crank to congruences}
\author{Tewodros Amdeberhan and Mircea Merca}
\date{}
\begin{document}
\maketitle

\begin{abstract}  In this paper, we investigate the arithmetic properties of the difference between the number of partitions of a positive integer $n$ with even crank and those with odd crank, denoted $C(n) = c_e(n) - c_o(n)$. Inspired by Ramanujan’s classical congruences for the partition function $p(n)$, we establish a Ramanujan-type congruence 
 for $C(n)$, proving that $C(5n+4) \equiv 0 \pmod{5}$. Further, we study the generating function $\sum_{n=0}^\infty a(n)\, q^n = \frac{(-q; q)^2_\infty}{(q; q)_\infty}$, which arises naturally in this context, and provide multiple combinatorial interpretations for the sequence $a(n)$. We then offer a complete characterization of the values $a(n) \mod 2^m$ for $m = 1, 2, 3, 4$, highlighting their connection to generalized pentagonal numbers. Using computational methods and modular forms, we also derive new identities and congruences, including $a(7n+2) \equiv 0 \pmod{7}$, expanding the scope of partition congruences in arithmetic progressions. These results build upon classical techniques and recent computational advances, revealing deep combinatorial and modular structure within partition functions.
\\ 
\\
{\bf Keywords:}  partitions, crank, congruences
\\
\\
{\bf MSC 2010:}   11P81\, 11P82\, 11P83
\end{abstract}

\section{Introduction}

A partition of a positive integer $n$ is any non-increasing sequence of positive
integers whose sum is $n$ \cite{Andrews98}. Let $p(n)$ denote the number of partitions of $n$ with the usual convention that
$p(0)=1$ and $p(n)=0$ when $n$ is not a non-negative integer. 
In 1919, Ramanujan \cite{Ramanujan} announced three elegant congruences satisfied by the partition function $p(n)$. These results reveal a remarkable arithmetic regularity, showing that for every non-negative integer $k$, the partition function $p(k)$ vanishes modulo $5$, $7$, and $11$ when $k$ is of the forms $5n + 4$, $7n + 5$, and $11n + 6$, respectively, i.e.,
\begin{align*}
p(5n+4) &\equiv 0 \pmod 5\\
p(7n+5) &\equiv 0 \pmod 7\\
p(11n+6) &\equiv 0 \pmod {11}.
\end{align*}

In order to explain the last two congruences combinatorially, Dyson \cite{Dyson}
introduced the rank of a partition. The rank of a partition is defined to be its
largest part minus the number of its parts.

In 1988, Andrews and Garvan \cite{Andrews88} defined the crank of an integer partition as follows. 
The crank of a partition is the largest part of the partition if there are no ones as parts and otherwise is the number of parts larger than the number of ones minus the number of ones. More precisely,
for a partition $\lambda=[\lambda_1,\lambda_2,\ldots,\lambda_k]$
let $\ell(\lambda)$ denote the largest part of $\lambda$, 
$\omega(\lambda)$ denote the number of $1$'s in $\lambda$, and $\mu(\lambda)$ denote the number of parts of $\lambda$ larger than $\omega(\lambda)$. The crank $c(\lambda)$ is given by
$$
c(\lambda) = \begin{cases}
\ell(\lambda), & \text{if $\omega(\lambda)=0,$}\\
\mu(\lambda)-\omega(\lambda),& \text{if $\omega(\lambda)>0.$}
\end{cases}
$$

	\begin{definition}
		Let $n$ be a non-negative integer. We define:
		\begin{enumerate}
			\item  $c_e(n)$ is the number of partitions of $n$ with even crank.
			\item  $c_o(n)$ is the number of partitions of $n$ with odd crank.
			\item  $C(n):=c_e(n)-c_o(n)$. 
		\end{enumerate}
	\end{definition}

\medskip
From \cite{Andrews18}, with a small corection, we have the identity:
$$
\sum_{n=0}^\infty C(n)\, q^n = 
2q+\frac{(q;q)_\infty}{(-q;q)_\infty^2},
$$
where 
the standard $q$-Pochhammer symbol $(a; q)_\infty$ is given by:
$$
(a; q)_\infty = \prod_{n=0}^\infty (1 - a q^n).
$$
We assume that $q$ is a complex number with $|q| < 1$.

As stated in \cite[Theorem 2.3.4]{Berndt}, Ramanujan's first congruence can be derived from the elegant identity:
$$
\sum_{n=0}^\infty p(5n + 4)\, q^n = 
5\cdot \frac{(q^5;q^5)^5_\infty}{(q;q)^6_\infty}.
$$
Inspired by this result, we observe the following analogous identity.

\medskip
\begin{theorem} \label{Th:1}
    For $|q|<1$, we have
        $$
    \sum_{n=0}^\infty C(5n+4) \, q^{5n+4} 
    =5q^4\cdot \frac{(q^5;q^5)^2_\infty\,(q^{25};q^{25})_\infty\,(q^{50};q^{50})^2_\infty}{(q^{10};q^{10})^4_\infty}.
    $$
\end{theorem}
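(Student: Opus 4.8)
\medskip
\noindent\emph{Plan of proof.} I would derive Theorem~\ref{Th:1} by an explicit $5$-dissection. Euler's identity $(-q;q)_\infty=(q^2;q^2)_\infty/(q;q)_\infty$ turns the generating function recorded in the introduction into
\[
\sum_{n\ge 0}C(n)\,q^n=2q+\frac{(q;q)^3_\infty}{(q^2;q^2)^2_\infty}.
\]
Since the term $2q$ lies in the residue class $1\pmod 5$, the series $\sum_{n\ge 0}C(5n+4)\,q^{5n+4}$ is exactly the part of $F(q):=(q;q)^3_\infty/(q^2;q^2)^2_\infty$ supported on exponents $\equiv 4\pmod 5$, so the theorem reduces to evaluating that part.

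\medskip
\noindent The $5$-dissection of $F$ rests on two classical inputs. First, Jacobi's identity $(q;q)^3_\infty=\sum_{k\ge 0}(-1)^k(2k+1)q^{k(k+1)/2}$: a triangular number $k(k+1)/2$ is $\equiv 0,1,3\pmod 5$ only, with residue $3$ occurring precisely when $k\equiv 2\pmod 5$, in which case $2k+1\equiv 0\pmod 5$; writing $k=5j+2$, so that $k(k+1)/2=3+25\tfrac{j(j+1)}{2}$, and reapplying Jacobi's identity gives
\[
(q;q)^3_\infty=A_0(q^5)+q\,A_1(q^5)+5q^3\,(q^{25};q^{25})^3_\infty,
\]
so a factor $5$ is already present in the residue-$3$ block. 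Second, I would combine this with the classical $5$-dissection of $1/(q^2;q^2)^2_\infty$ — equivalently, writing $F=\varphi(-q)^2/(q;q)_\infty$, with the $5$-dissection of $1/(q;q)_\infty$ coming from Ramanujan's Rogers--Ramanujan continued-fraction factorization $(q;q)_\infty=(q^{25};q^{25})_\infty\bigl(S(q^5)^{-1}-q-q^2S(q^5)\bigr)$ and the companion relation $S(q)^{-5}-11q-q^2S(q)^5=(q;q)^6_\infty/(q^5;q^5)^6_\infty$, together with the entirely explicit $5$-dissection of $\varphi(-q)=\sum_n(-1)^nq^{n^2}$ (whose exponents avoid the residues $2,3\pmod 5$). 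Multiplying the two $5$-dissections and collecting the contributions landing in residue $4$ then writes $\sum_n C(5n+4)q^{5n+4}$ as $q^4$ times a finite combination of eta-quotients in $q^5$.

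\medskip
\noindent The crux is the final collapse. In that finite combination only one term manifestly carries a factor $5$ — the one multiplying the $5\,(q^{25};q^{25})^3_\infty$ block above (or Ramanujan's $5\,(q^{25};q^{25})^5_\infty/(q^5;q^5)^6_\infty$ in the alternative setup) — so one must show the remaining terms also sum to a multiple of $5$, and then simplify the whole expression down to the single eta-quotient $5q^4(q^5;q^5)^2_\infty(q^{25};q^{25})_\infty(q^{50};q^{50})^2_\infty/(q^{10};q^{10})^4_\infty$. I would carry this out with the usual toolbox of Ramanujan's theta-function identities — the Jacobi triple product in the form $f(-a,-b)=(a;ab)_\infty(b;ab)_\infty(ab;ab)_\infty$, together with $\varphi(-q)=(q;q)^2_\infty/(q^2;q^2)_\infty$ and $\psi(q)=(q^2;q^2)^2_\infty/(q;q)_\infty$ — simplifying after the substitution $q^5\mapsto q$; the $5$-dissection of $1/(q^2;q^2)^2_\infty$ needed along the way is also available in ready-made form in the literature. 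As a consistency check, or as an alternative proof, one may observe that, up to a power of $q$, both sides are eta-quotients of weight $\tfrac12$ on a common congruence subgroup $\Gamma_0(N)$, so the identity follows from matching finitely many $q$-coefficients up to the relevant Sturm bound; equivalently, it can be read off from the action of a $U_5$-type Hecke operator on a finite-dimensional space of modular forms.
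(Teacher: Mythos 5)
Your outline is sound and begins exactly where the paper does: Euler's identity turns the crank generating function into $2q+(q;q)_\infty^3/(q^2;q^2)_\infty^2$, and since $2q$ sits in residue class $1$, the theorem is the residue-$4$ part of the $5$-dissection of $\phi^3(q)/\phi^2(q^2)$. Where you genuinely diverge is in how the division by $\phi^2(q^2)$ is handled. You propose to multiply a dissection of the numerator (your Jacobi-identity extraction of the block $5q^3(q^{25};q^{25})_\infty^3$ is precisely the last term of the paper's Lemma~\ref{Lm:1}(b)) against a dissection of a reciprocal, either $1/(q^2;q^2)_\infty^2$ taken from the literature or $1/(q;q)_\infty$ via the Rogers--Ramanujan continued fraction. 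The paper never dissects a reciprocal: writing $\phi^3(q)=\bigl(\phi^3(q)/\phi^2(q^2)\bigr)\cdot\phi^2(q^2)$ and comparing residue classes yields a $5\times 5$ circulant linear system for the unknown pieces $P_s$, solved by Cramer's rule; the circulant determinant evaluates via fifth roots of unity to $\phi^{12}(q^{10})/\phi^2(q^{50})$, so only the dissections of $\phi(q)$, $\phi^2(q^2)$, $\phi^3(q)$ from Lemma~\ref{Lm:1} are needed, with no continued-fraction input. Your route buys familiarity (it is the same machinery that proves Ramanujan's $p(5n+4)$ identity); the paper's buys economy of inputs at the cost of a determinant computation. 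Be aware that in both arguments the decisive final collapse to the single eta-quotient is asserted rather than displayed --- the paper calls it ``routine simplifications'' of $D_4$, you call it ``the usual toolbox'' --- so your write-up is no less complete than the paper's at that point; your fallback via a $U_5$-type operator and a Sturm bound (the same philosophy as the paper's \texttt{RaduRK} proof of Theorem~\ref{Th:6}) is a legitimate way to reduce it to a finite check, with the small caveat that the left-hand side is not itself an eta-quotient but the image of one under the extraction operator, so the level and bound must be computed for that image rather than read off an eta-quotient table.
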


\begin{lemma} \label{Lm:1} Let $\phi(q):= \prod_{n = 1}^{\infty}(1 - q^{n}) = (q; q)_{\infty}$. We have the quintisection expansions 
\begin{align*} (a) \quad \phi^2(q^2)
&=\frac{\phi^2(q^{50})\, (q^{20};q^{50})_{\infty}^2(q^{30};q^{50})_{\infty}^2}{(q^{10};q^{50})_{\infty}^2(q^{40};q^{50})_{\infty}^2}
+2q^6\, \frac{\phi^2(q^{50})\, (q^{10};q^{50})_{\infty}(q^{40};q^{50})_{\infty}}{(q^{20};q^{50})_{\infty} (q^{30};q^{50})_{\infty}}  \\
&- 2q^2\, \frac{\phi^2(q^{50})\, (q^{20};q^{50})_{\infty}(q^{30};q^{50})_{\infty}}{(q^{10};q^{50})_{\infty} (q^{40};q^{50})_{\infty}} 
+q^8\, \frac{\phi^2(q^{50})\, (q^{10};q^{50})_{\infty}^2(q^{40};q^{50})_{\infty}^2 }{(q^{20};q^{50})_{\infty}^2(q^{30};q^{50})_{\infty}^2} \\
& -q^4\, \phi^2(q^{50}), \\
(b) \ \, \quad \phi^3(q)
&=\frac{\phi^3(q^{25})\, (q^{15};q^{25})_{\infty}^3 (q^{10};q^{25})_{\infty}^3} {(q^{20};q^{25})_{\infty}^3(q^{5};q^{25})_{\infty}^3} 
-3q^5\, \frac{\phi^3(q^{25})\, (q^{20};q^{25})_{\infty}^2 (q^{5};q^{25})_{\infty}^2} {(q^{15};q^{25})_{\infty}^2 (q^{10};q^{25})_{\infty}^2} \\
& - 3q\, \frac{\phi^3(q^{25})\, (q^{15};q^{25})_{\infty}^2 (q^{10};q^{25})_{\infty}^2} { (q^{20};q^{25})_{\infty}^2 (q^{5};q^{25})_{\infty}^2} 
- q^6\, \frac{\phi^3(q^{25})\, (q^{20};q^{25})_{\infty}^3(q^{5};q^{25})_{\infty}^3} {(q^{15};q^{25})_{\infty}^3 (q^{10};q^{25})_{\infty}^3} \\
& +5q^3\, \phi^3(q^{25}).
\end{align*}
\end{lemma}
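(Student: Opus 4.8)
The plan is to derive both quintisections from one classical input: Ramanujan's $5$-dissection of Euler's function (see, e.g., Berndt or Hirschhorn's \emph{The Power of $q$}),
\[
\phi(q) = \phi(q^{25})\left(t - q - \frac{q^{2}}{t}\right), \qquad t = t(q^{5}) := \frac{(q^{10};q^{25})_{\infty}(q^{15};q^{25})_{\infty}}{(q^{5};q^{25})_{\infty}(q^{20};q^{25})_{\infty}},
\]
where $t(q^{5}) = q/R(q^{5})$ and $R$ is the Rogers--Ramanujan continued fraction. If a self-contained argument is preferred, this follows from the pentagonal number theorem $\phi(q)=\sum_{k\in\mathbb{Z}}(-1)^{k}q^{k(3k-1)/2}$ by sorting the exponents $k(3k-1)/2$ by $k\bmod 5$ (they occupy only the residues $0,1,2$) and evaluating the resulting theta series; the class $k\equiv 1$ alone contributes exactly $-q\,\phi(q^{25})$, and the remaining classes $k\equiv 0,2$ and $k\equiv 3,4$ produce $\phi(q^{25})\,t$ and $-q^{2}\phi(q^{25})/t$.

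For part $(b)$ I would simply cube this identity. Writing $A=t$, $B=q$, $C=q^{2}/t$ and expanding $(A-B-C)^{3}$, the degree-$2$ terms $-3A^{2}C$ and $3AB^{2}$ cancel, the degree-$4$ terms $3AC^{2}$ and $-3B^{2}C$ cancel, and the degree-$3$ part collapses via $6ABC-B^{3}=6q^{3}-q^{3}=5q^{3}$, so
\[
\left(t - q - \frac{q^{2}}{t}\right)^{3} = t^{3} - 3t^{2}q + 5q^{3} - \frac{3q^{5}}{t^{2}} - \frac{q^{6}}{t^{3}}.
\]
Multiplying through by $\phi^{3}(q^{25})$ and substituting the Pochhammer expression for $t$ turns the five monomials into precisely the five summands claimed in $(b)$; in particular, the vanishing of the coefficients of $q^{2}$ and $q^{4}$ is exactly the (a priori non-obvious) statement that those components of the quintisection of $\phi^{3}(q)$ vanish.

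For part $(a)$ I would first replace $q$ by $q^{2}$, which gives $\phi(q^{2}) = \phi(q^{50})\bigl(s - q^{2} - q^{4}/s\bigr)$ with
\[
s = s(q^{10}) := \frac{(q^{20};q^{50})_{\infty}(q^{30};q^{50})_{\infty}}{(q^{10};q^{50})_{\infty}(q^{40};q^{50})_{\infty}},
\]
and then square:
\[
\left(s - q^{2} - \frac{q^{4}}{s}\right)^{2} = s^{2} - 2sq^{2} - q^{4} + \frac{2q^{6}}{s} + \frac{q^{8}}{s^{2}},
\]
the middle $-q^{4}$ arising as $-2AC+B^{2} = -2q^{4}+q^{4}$. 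Multiplying by $\phi^{2}(q^{50})$ and inserting the expression for $s$ reproduces the five terms of $(a)$ verbatim; since every power of $q$ that occurs is at most $q^{8}<q^{10}$, no regrouping is needed and this display is literally the quintisection of $\phi^{2}(q^{2})$ in the variable $q^{2}$.

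The only substantive ingredient is thus the $5$-dissection of $\phi(q)$ itself --- in particular the facts that its middle component equals $-q\,\phi(q^{25})$ and that its two outer components are reciprocal eta-quotients times $\phi(q^{25})$ --- and I expect establishing (or locating) that to be the main obstacle; granting it, the remainder is the routine expansion of a binomial cube and a binomial square followed by matching of Pochhammer symbols.
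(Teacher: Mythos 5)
Your proposal is correct and follows essentially the same route as the paper: both start from Ramanujan's $5$-dissection $\phi(q)=\phi(q^{25})\bigl(t-q-q^{2}/t\bigr)$ (cited in the paper from Berndt) and obtain (a) by squaring after $q\mapsto q^{2}$ and (b) by cubing, regrouping by powers of $q$ modulo $5$. Your explicit expansions and cancellations match the stated identities term by term.
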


\begin{proof} We revive an identity due to Ramanujan (for instance, see Berndt's book \cite[pp. 81-82]{Berndt91}),
$$\phi(q)=\frac{\phi(q^{25})\, (q^{15};q^{25})_{\infty} (q^{10};q^{25})_{\infty}}{(q^{20};q^{25})_{\infty}(q^{5};q^{25})_{\infty}} -q\,  \phi(q^{25})
-q^2\, \frac{\phi(q^{25})\, (q^{5};q^{25})_{\infty} (q^{20};q^{25})_{\infty}}
{(q^{15};q^{25})_{\infty}(q^{10};q^{25})_{\infty}},$$
from which both (a) and (b) follow after grouping terms according to the powers of $q$ modulo $5$.
\end{proof}

\medskip

As a corollary of Theorem \ref{Th:1},  we derive the following Ramanujan type congruences modulo $5$.

\medskip
\begin{corollary} \label{Th:2}
    For any non-negative integer $n$, we have:
    \begin{enumerate}
        \item[(a)] 
        $C(5n+4) \equiv 0 \pmod 5$;
        \item[(b)] $c_e(5n+4) \equiv 0 \pmod 5$;
        \item[(c)] $c_o(5n+4) \equiv 0 \pmod 5$.
    \end{enumerate}
\end{corollary}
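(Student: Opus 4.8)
The plan is to deduce all three congruences from Theorem~\ref{Th:1}, from Ramanujan's classical congruence $p(5n+4)\equiv 0\pmod 5$ recalled in the introduction, and from the elementary identity $c_e(N)+c_o(N)=p(N)$, valid for every positive integer $N$ since the crank assigns to each partition a well-defined integer, which is either even or odd.

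For part (a), I would observe that the right-hand side of Theorem~\ref{Th:1} is $5$ times a power series with integer coefficients. Indeed, $(q^5;q^5)^2_\infty\,(q^{25};q^{25})_\infty\,(q^{50};q^{50})^2_\infty$ is a product of factors of the form $1-q^k$ and hence lies in $\mathbb{Z}[[q]]$, while $1/(q^{10};q^{10})^4_\infty$ is the generating function for $4$-coloured partitions into parts divisible by $10$ and therefore also lies in $\mathbb{Z}[[q]]$ (in fact with non-negative coefficients). Multiplying by $5q^4$, every coefficient of $\sum_{n\ge0}C(5n+4)\,q^{5n+4}$ is divisible by $5$; extracting the coefficient of $q^{5n+4}$ gives $C(5n+4)\equiv 0\pmod 5$.

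For parts (b) and (c), fix $n\ge 0$ and set $N=5n+4\ge 4$. Adding and subtracting $c_e(N)+c_o(N)=p(N)$ and $c_e(N)-c_o(N)=C(N)$ gives $2c_e(N)=p(N)+C(N)$ and $2c_o(N)=p(N)-C(N)$. By Ramanujan's congruence $p(N)\equiv 0\pmod 5$ and by part (a) $C(N)\equiv 0\pmod 5$, hence $2c_e(N)\equiv 2c_o(N)\equiv 0\pmod 5$; since $2$ is invertible modulo $5$, this forces $c_e(5n+4)\equiv 0\pmod 5$ and $c_o(5n+4)\equiv 0\pmod 5$.

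I expect no genuine obstacle: the content is carried entirely by Theorem~\ref{Th:1}, and the argument is essentially immediate once it is in hand. The only point that merits a moment's care is that the ``$2q$'' correction term in the generating function $\sum_{n\ge0}C(n)\,q^n=2q+(q;q)_\infty/(-q;q)^2_\infty$, together with the well-known exceptional behaviour of the crank at $n=1$, plays no role here, since the arithmetic progression $5n+4$ never meets $n=1$.
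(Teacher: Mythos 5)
Your proposal is correct and follows what is evidently the paper's intended derivation (the paper states the corollary without an explicit proof): part (a) is immediate because the right-hand side of Theorem~\ref{Th:1} is $5q^4$ times a power series in $\mathbb{Z}[[q]]$, and parts (b), (c) follow by combining $c_e(N)+c_o(N)=p(N)$ with Ramanujan's congruence $p(5n+4)\equiv 0\pmod 5$ and inverting $2$ modulo $5$. Your remark that the $2q$ correction term and the exceptional case $n=1$ are irrelevant on the progression $5n+4$ is also accurate.
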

\medskip

In this context, we observe the following identity.

\medskip
\begin{theorem} \label{Th:3}
    For $|q|<1$, we have that
    $$
    \frac{(q^2;q^2)_\infty^3\, (q^{10};q^{10})_\infty}{(q;q)_\infty\, (q^5;q^5)_\infty^3}-q\,\frac{(q;q)_\infty\, (q^{10};q^{10})_\infty^5}{(q^2;q^2)_\infty\,(q^5;q^5)_\infty^5} = 1.
    $$
\end{theorem}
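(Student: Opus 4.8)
\textit{Proof proposal.} The plan is to pass to Dedekind eta-quotients and then show that a single modular function on $\Gamma_0(10)$ must be constant. Write $q=e^{2\pi i\tau}$ and $\eta(\tau)=q^{1/24}(q;q)_\infty$, and set
$$f(\tau):=\frac{\eta(2\tau)^3\,\eta(10\tau)}{\eta(\tau)\,\eta(5\tau)^3},\qquad g(\tau):=\frac{\eta(\tau)\,\eta(10\tau)^5}{\eta(2\tau)\,\eta(5\tau)^5}.$$
The $q$-expansion prefactors are $q^{(6+10-1-15)/24}=q^{0}$ and $q^{(1+50-2-25)/24}=q^{1}$, so $f$ equals the first term on the left of the theorem, and $q\cdot\frac{(q;q)_\infty(q^{10};q^{10})_\infty^5}{(q^2;q^2)_\infty(q^5;q^5)_\infty^5}=g$; hence the left-hand side of the theorem is $f-g$, and the claim becomes $f-g=1$. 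A check of Ligozat's criterion — that $\sum_{\delta\mid 10}\delta\,r_\delta\equiv\sum_{\delta\mid 10}(10/\delta)\,r_\delta\equiv 0\pmod{24}$, that $\prod_\delta\delta^{\,r_\delta}$ is a square of a rational number, and that the weight $\tfrac12\sum_\delta r_\delta$ vanishes — shows that $f$ and $g$ are modular functions for $\Gamma_0(10)$ with trivial multiplier system.

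The heart of the argument is the behaviour at the four cusps of $\Gamma_0(10)$, which may be taken to be $0,\ \tfrac12,\ \tfrac15,\ \infty$; since an eta-quotient is holomorphic and nonvanishing on the upper half-plane, only the cusps matter. A short computation with the standard order formula for $\prod_{\delta\mid N}\eta(\delta\tau)^{r_\delta}$ at a cusp $a/c$ shows that $f$ is holomorphic at $0$, $\tfrac12$ and $\infty$ (with a simple zero at $\tfrac12$) and has a \emph{simple} pole at $\tfrac15$, while $g$ is holomorphic at $0$, $\tfrac12$ and $\infty$ (with a simple zero at $\infty$) and also has a \emph{simple} pole at $\tfrac15$. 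Hence, for the appropriate constant $\alpha$, the function $f-\alpha g$ is holomorphic at $\tfrac15$ as well, and therefore at every cusp and throughout $\mathbb H$; a holomorphic modular function on the compact Riemann surface $X_0(10)$ is constant, so $f-\alpha g=\beta$ for some constant $\beta$.

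To identify $\alpha$ and $\beta$ it then suffices to compare the first two Fourier coefficients at $\infty$. From the product expansions,
$$f=\frac{(q^2;q^2)_\infty^3(q^{10};q^{10})_\infty}{(q;q)_\infty(q^5;q^5)_\infty^3}=1+q+O(q^2),\qquad g=q\cdot\frac{(q;q)_\infty(q^{10};q^{10})_\infty^5}{(q^2;q^2)_\infty(q^5;q^5)_\infty^5}=q+O(q^2),$$
so $\beta=f-\alpha g=1+(1-\alpha)q+O(q^2)$; since $\beta$ is constant, $\alpha=1$ and $\beta=1$, giving $f-g=1$ as required.

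The step I expect to be the main obstacle is the cusp analysis: one must verify carefully that the poles of $f$ and of $g$ are both simple and sit at the \emph{same} cusp $\tfrac15$, with non-negative order at each of the remaining three cusps, since only then does the problem collapse to the one-line linear-algebra statement above. Two more computational routes are available if one prefers: multiply $f-g-1$ by an eta-product vanishing at $\tfrac15$ to obtain an identity between holomorphic modular forms on $\Gamma_0(10)$ and verify it by checking coefficients up to Sturm's bound; or, more classically, use $\psi(q)=(q^2;q^2)_\infty^2/(q;q)_\infty$ and $\psi(q^5)=(q^{10};q^{10})_\infty^2/(q^5;q^5)_\infty$ to recast the claim as $\psi(q)^2-q\,\psi(q^5)^2=\frac{(q^2;q^2)_\infty(q^5;q^5)_\infty^3}{(q;q)_\infty(q^{10};q^{10})_\infty}$, then prove it from the $5$-dissection of $\psi(q)$ together with the Rogers--Ramanujan-type dissection of $1/(q;q^2)_\infty$ — elementary, but noticeably more laborious.
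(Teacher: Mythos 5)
Your argument is correct and is essentially the paper's own proof: both pass to the weight-$0$ eta-quotients on $\Gamma_0(10)$, observe that each term has only a simple pole at the cusp $\tfrac15$ and no other poles, conclude that a suitable linear combination is constant, and pin down the constants from the first two $q$-coefficients. Your write-up simply makes explicit (via Ligozat's criterion and the cusp-order formula) the verifications the paper leaves implicit, so no further comment is needed.
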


\medskip

In this paper, we will examine the arithmetic properties of the sequence $a(n)$, defined as the reciprocal of the infinite product arising from the generating function of $c_e(n) - c_o(n)$:
$$
\sum_{n=0}^\infty a(n)\,q^n = \frac{(-q;q)_\infty^2}{(q;q)_\infty}.
$$
The generous nature of this generating function allows us to remark multiple combinatorial interpretations for $a(n)$:
\begin{enumerate}
    \item [$\bullet$]  Considering Euler's identity
$$(-q;q)_\infty = \frac{1}{(q;q^2)_\infty},$$ we easily deduce that $a(n)$ is the number of partitions of $n$ in which each odd part is decorated using $3$ different colors. For example $a(3)=16$, because the partitions in question are:
   \begin{align*}
       & (3_3),\ (3_2),\ (3_1),\ (2,1_3),\ (2,1_2),\ (2,1_1),\ (1_3,1_3,1_3),\ (1_3,1_3,1_2),\  (1_3,1_3,1_1),\\
       & (1_3,1_2,1_2),\ (1_3,1_2,1_1),\ (1_3,1_1,1_1),\ (1_2,1_2,1_2),\ (1_2,1_2,1_1),\ (1_2,1_1,1_1),\ (1_1,1_1,1_1).
   \end{align*}
   \item [$\bullet$] Let $\nu_2(n)$ be the highest power of $2$ dividing $n$. Based on the formulas
   $$
   \prod_{n=1}\frac{1}{1-q^n} = \prod_{n=1}\prod_{m=0} (1+q^{n2^m}) = \prod_{n=1} (1+q^n)^{\nu_2(2n)},
   $$
   we deduce that
   $$
   \frac{(-q;q)_\infty^2}{(q;q)_\infty} = \prod_{n=1} (1+q^n)^{3+\nu_2(n)}.
   $$
   Thus $a(n)$ is the number of colored partitions of $n$ into distinct parts in which each part $k$ is decorated using $\nu_2(2^3k)$ different colors. For example $a(3)=16$, because the partitions in question are:
   \begin{align*}
       & (3_3),\ (3_2),\ (3_1),\ (2_4,1_3),\ (2_4,1_2),\ (2_4,1_1),\ (2_3,1_3),\ (2_3,1_2),\  (2_3,1_1),\\
       & (2_2,1_3),\ (2_2,1_2),\ (2_2,1_1),\ (2_1,1_3),\ (2_1,1_2),\ (2_1,1_1),\ (1_2,1_2,1_1).
   \end{align*}
   \item [$\bullet$] The product structure ${(q;q)^{-1}_\infty}\cdot (-q;q)^2_\infty$ can be understood as the product of two kinds of partitions:
   \begin{enumerate}
       \item [-] ${(q;q)^{-1}_\infty}$: This factor corresponds to choosing a standard partition.
       \item [-] $(-q;q)^2_\infty$: This factor introduces the coloring mechanism on partitions into distinct parts, where each part is decorated using $2$ different colors.  
   \end{enumerate}
   Thus $a(n)$ is the number of partitions of $n$ in which the first $2$ occurrences of their parts receive any (but distinct) of the $2$ colors. For example $a(3)=16$, because the partitions in question are:
   \begin{align*}
       & (3),\ (3_1),\ (3_2),\ (2,1),\ (2,1_1),\ (2,1_2),\ (2_1,1),\ (2_1,1_1),\  (2_1,1_2),\\
       & (2_2,1),\ (2_2,1_1),\ (2_2,1_2),\ (1,1,1),\ (1_1,1,1),\ (1_2,1,1),\ (1_2,1_1,1).
   \end{align*}
\end{enumerate}

We define the sequence $(\omega_k)_{k \geq 0}$ to be the generalized pentagonal numbers, given by the formula:
$$
\omega_k = \frac{1}{2} \left\lceil \frac{k}{2} \right\rceil \left\lceil \frac{3k+1}{2} \right\rceil,
$$
where $ \lceil x \rceil $ denotes the ceiling function, which rounds $ x $ up to the nearest integer. We remark that
$$\omega_{2n}=\frac{n(3n+1)}{2}\qquad\text{and}\qquad \omega_{2n-1}=\frac{n(3n-1)}{2}.$$
The following results provide a complete characterization of the congruences modulo $2^m$  of the sequence $a(n)$ for $m \in \{1, 2, 3, 4\}$.

\medskip
\begin{theorem} \label{Th:4}
  Let $n$ be a non-negative integer. 
  \begin{enumerate}
      \item[(a)] If $m\in\{1,2,3,4\}$, then $a(n) \equiv 0 \pmod {2^m}\ \iff\ n\not\in\{\omega_k \,|\, k\geq 0\}.  $
      \item [(b)]  If $m\in\{1,2,3,4\}$, then $a(\omega_n) \equiv 1 \pmod {2^m}\ \iff\ n\equiv \{-1,0\} \pmod {2^m}.$ 
      \item [(c)]  If $m\in\{2,3,4\}$, then $a(\omega_n) \equiv 3 \pmod {2^m}\ \iff\ n\equiv \{-2,1\} \pmod {2^m}.$ 
      \item [(d)]  If $m\in\{3,4\}$, then 
      \begin{enumerate}
          \item [($d_1$)] $a(\omega_n) \equiv 5 \pmod {2^m}\ \iff\ n\equiv \{-m-1,m\} \pmod {2^m};$
          \item [($d_2$)] $a(\omega_n) \equiv 7 \pmod {2^m} \ \iff \ n\equiv \{-3,2\} \pmod {2^m}.$
      \end{enumerate}
      \item [(e)] $a(\omega_n) \equiv 9 \pmod {2^4}\ \iff\ n\equiv \{-8,7\} \pmod {2^4}.$
      \item [(f)] $a(\omega_n) \equiv 11 \pmod {2^4}\ \iff\ n\equiv \{-7,6\} \pmod {2^4}.$
      \item [(g)] $a(\omega_n) \equiv 13 \pmod {2^4}\ \iff\ n\equiv \{-4,3\} \pmod {2^4}.$
      \item [(h)] $a(\omega_n) \equiv 15 \pmod {2^4}\ \iff\ n\equiv \{-6,5\} \pmod {2^4}.$
  \end{enumerate}
\end{theorem}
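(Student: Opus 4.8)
The plan is to determine $\displaystyle A(q):=\sum_{n\ge 0}a(n)q^{n}$ modulo $2^{m}$ in an \emph{explicit closed form} supported near the generalized pentagonal numbers, and then to read off (a)--(h) by inspecting the coefficient of $q^{\omega_{n}}$. First I would simplify the generating function: since $(-q;q)_{\infty}(q;q)_{\infty}=(q^{2};q^{2})_{\infty}$,
\[
A(q)=\frac{(-q;q)_{\infty}^{2}}{(q;q)_{\infty}}=\frac{(q^{2};q^{2})_{\infty}^{2}}{(q;q)_{\infty}^{3}}=\frac{(-q;q)_{\infty}}{\varphi(-q)},\qquad \varphi(-q):=\frac{(q;q)_{\infty}^{2}}{(q^{2};q^{2})_{\infty}}=\sum_{n\in\mathbb Z}(-1)^{n}q^{n^{2}},
\]
where $\varphi$ is Ramanujan's classical theta function (not to be confused with the $\phi$ of Lemma~\ref{Lm:1}). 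Writing $\varphi(-q)=1+2u$ with $u=\sum_{n\ge 1}(-1)^{n}q^{n^{2}}\in\mathbb Z[[q]]$, and using $v_{2}\!\left(\binom{2^{m-1}}{j}\right)=(m-1)-v_{2}(j)$ for $1\le j\le 2^{m-1}$ together with $j>v_{2}(j)$, one gets $\varphi(-q)^{2^{m-1}}\equiv 1\pmod{2^{m}}$, hence $\varphi(-q)^{-1}\equiv\varphi(-q)^{2^{m-1}-1}\pmod{2^{m}}$. Since $(-q;q)_{\infty}\,\varphi(-q)=(q;q)_{\infty}$, this yields the basic reduction
\[
A(q)\equiv (q;q)_{\infty}\,\varphi(-q)^{\,2^{m-1}-2}\pmod{2^{m}}\quad(m\ge 2),\qquad A(q)\equiv (q;q)_{\infty}\pmod 2 .
\]
Throughout I would combine this with Euler's pentagonal number theorem written as $(q;q)_{\infty}=\sum_{n\ge 0}(-1)^{\lceil n/2\rceil}q^{\omega_{n}}$.

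For $m\in\{1,2\}$ the exponent $2^{m-1}-2\le 0$ contributes nothing ($\varphi(-q)^{2^{m-1}-2}\equiv 1\pmod{2^{m}}$), so $A(q)\equiv (q;q)_{\infty}\pmod 4$. Hence $a(n)\equiv 0\pmod 4$ unless $n=\omega_{k}$ for some $k$, while $a(\omega_{n})\equiv (-1)^{\lceil n/2\rceil}\pmod 4$ is odd; it is $\equiv 1$ exactly when $\lceil n/2\rceil$ is even, i.e.\ $n\equiv\{-1,0\}\pmod 4$, and $\equiv 3$ exactly when $n\equiv\{-2,1\}\pmod 4$. This gives (a) and (b) for $m\in\{1,2\}$ and (c) for $m=2$ (for $m=1$ the classes $\{-1,0\}$ already exhaust $\mathbb Z/2$).

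For $m=3$ I would expand $\varphi(-q)^{2}=1+4u+4u^{2}$ and reduce modulo $8$ using $4\equiv-4$: since $u\equiv\theta(q)$ and $u^{2}\equiv\theta(q^{2})\pmod 2$ with $\theta(q):=\sum_{n\ge 1}q^{n^{2}}$, one obtains $\varphi(-q)^{2}\equiv 1+4\theta(q)+4\theta(q^{2})\pmod 8$, hence
\[
A(q)\equiv (q;q)_{\infty}+4\,(q;q)_{\infty}\bigl(\theta(q)+\theta(q^{2})\bigr)\pmod 8 .
\]
The crucial input here is the mod-$2$ congruence
\[
(q;q)_{\infty}\bigl(\theta(q)+\theta(q^{2})\bigr)\equiv \sum_{\substack{k\ge 0\\ \omega_{k}\ \mathrm{odd}}}q^{\omega_{k}}\pmod 2 ,
\]
i.e.\ that $(q;q)_{\infty}(\theta(q)+\theta(q^{2}))$ agrees mod $2$ with the odd part $\tfrac12\bigl((q;q)_{\infty}-(-q;-q)_{\infty}\bigr)$ of Euler's series. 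I would prove it by clearing the $\tfrac12$, i.e.\ by establishing the eta-quotient identity $\frac{(q;q)_{\infty}^{5}}{(q^{2};q^{2})_{\infty}^{2}}\equiv 3(q;q)_{\infty}-2(-q;-q)_{\infty}\pmod 8$ — a congruence between (half-integral weight) modular forms, amenable either to a Sturm-type finite check or to elementary $2$-dissection. Granting it, $A(q)\equiv (q;q)_{\infty}+4\sum_{\omega_{k}\ \mathrm{odd}}q^{\omega_{k}}\pmod 8$, so the coefficient of $q^{\omega_{n}}$ equals $(-1)^{\lceil n/2\rceil}$ when $\omega_{n}$ is even and $(-1)^{\lceil n/2\rceil}+4$ when $\omega_{n}$ is odd; together with the elementary fact that $\omega_{n}$ is even $\iff n\equiv\{0,2,5,7\}\pmod 8$, this delivers (a),(b),(c) for $m=3$ and part (d) for $m=3$.

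For $m=4$ the same scheme applies with $\varphi(-q)^{6}=(1+2u)^{6}\equiv 1+12\,(u+u^{2})\pmod{16}$; writing $u^{2}=\theta(q^{2})+2\sum_{1\le r<s}(-1)^{r+s}q^{r^{2}+s^{2}}$ gives $A(q)\equiv (q;q)_{\infty}+12\,(q;q)_{\infty}(u+u^{2})\pmod{16}$, whose coefficient of $q^{\omega_{n}}$ is $(-1)^{\lceil n/2\rceil}$ plus $12$ times a signed count of the representations of $\omega_{n}$ as (a generalized pentagonal number) $+$ (a square, or twice a square, or a sum of two distinct squares). It then remains to prove that this correction is $16$-periodic in $n$, which I would do either by recognizing the relevant combination as a reduction mod a power of $2$ of a weight-$3/2$ modular form and invoking a Sturm bound, or — closer in spirit to Lemma~\ref{Lm:1} — by a binary dissection that already isolates the residue classes mod $16$; evaluating the $16$ classes then yields (d)--(h). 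The main obstacle throughout is exactly this step of \emph{certifying} the precise mod-$2^{m}$ shape of $A(q)$ — the power-of-$2$ reduction and the use of Euler's theorem being routine, whereas the ``correction'' congruences (the displayed mod-$2$ identity for $m=3$ and its mod-$4$ refinement for $m=4$) require a genuine theta/eta identity or a finite verification backed by an effective bound. Once these are in hand, parts (a)--(h), and in particular the palindromic symmetry $a(\omega_{n})\equiv a(\omega_{-1-n})\pmod{2^{m}}$ visible in the statement, follow by bookkeeping over the $2^{m}$ residue classes.
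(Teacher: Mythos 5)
Your overall strategy is the same as the paper's: invert Gauss's theta series $\varphi(-q)=(q;q)_\infty^2/(q^2;q^2)_\infty$ modulo $2^m$, write $A(q)\equiv (q;q)_\infty\varphi(-q)^{2^{m-1}-2}$, and read off the coefficients at the generalized pentagonal numbers. Your cases $m=1,2$ are complete and agree with the paper. The problem is that for $m=3$ and especially $m=4$ you stop exactly where the real work begins. For $m=3$ your ``crucial input'' $(q;q)_\infty\bigl(\theta(q)+\theta(q^2)\bigr)\equiv\sum_{\omega_k\ \mathrm{odd}}q^{\omega_k}\pmod 2$ is true, but you only propose to verify it by ``a Sturm-type finite check or elementary $2$-dissection'' without doing either; note also that your reformulation compares series of different half-integral weights, so a naive Sturm bound does not apply as stated. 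The paper needs no such auxiliary congruence: it rewrites $1-2\sum(-1)^nq^{n^2}+4\sum(-1)^nq^{2n^2}\equiv 2\frac{(q^2;q^2)_\infty}{(-q^2;q^2)_\infty}-\frac{(q;q)_\infty}{(-q;q)_\infty}\pmod 8$ and multiplies by $(-q;q)_\infty$, obtaining $A(q)\equiv 2(-q;-q)_\infty-(q;q)_\infty\pmod 8$ by pure product manipulation, which is your claim in disguise.

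For $m=4$ the gap is more serious: you never determine the correction term at all. You need \emph{both} that $12\,(q;q)_\infty(u+u^2)$ is supported modulo $16$ on pentagonal exponents (this is part (a) for $m=4$, which your sketch does not address) \emph{and} its explicit coefficients there; ``prove the correction is $16$-periodic and evaluate the classes'' is a plan, not a proof. The single missing ingredient is the weight-$3/2$ identity the paper invokes (Fine, Eq.\ (32.6)): since $u+u^2=\frac{\varphi(-q)^2-1}{4}$, one has
\[
(q;q)_\infty\,(u+u^2)=\frac14\left(\frac{(q;q)_\infty^5}{(q^2;q^2)_\infty^2}-(q;q)_\infty\right),
\qquad
\frac{(q;q)_\infty^5}{(q^2;q^2)_\infty^2}=\sum_{n=-\infty}^{\infty}(6n-1)\,q^{n(3n-1)/2},
\]
which at once shows the correction is supported on the $\omega_n$ and yields the closed formula $a(\omega_n)\equiv 2(-1)^{n(n+1)/2}-(-1)^n(3n+1)+\frac{1-(-1)^n}{2}\pmod{16}$; parts (a)--(h) then follow by checking the $16$ residue classes of $n$. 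The same identity also proves your unproved $m=3$ congruence (reduce the displayed formula mod $8$), so the fix is concrete, but as written your argument for $m=3,4$ is incomplete.
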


\medskip
\begin{corollary} \label{Th:5}
    Let $n$ be a non-negative integer. If $m\in\{1,2,3,4\}$, then $$a(w_{n+2^m}) \equiv a(\omega_n) \pmod {2^m}.$$    
\end{corollary}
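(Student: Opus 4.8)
The plan is to read Corollary \ref{Th:5} straight off Theorem \ref{Th:4}, as a bookkeeping statement about residue classes of the index $n$; no return to the product $(-q;q)_\infty^2/(q;q)_\infty$ is needed. Fix $m\in\{1,2,3,4\}$. First I would observe that each $\omega_n$ is itself a generalized pentagonal number, so Theorem \ref{Th:4}(a) rules out $a(\omega_n)\equiv 0\pmod{2^m}$; hence $a(\omega_n)\bmod 2^m$ equals one of the odd residues $1,3,\dots,2^m-1$.

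Next I would collect the characterizations in parts (b)--(h): for each odd $r$ with $1\le r\le 2^m-1$, the set $\{\,n\ge 0 : a(\omega_n)\equiv r\pmod{2^m}\,\}$ is, by Theorem \ref{Th:4}, a union of exactly two residue classes modulo $2^m$, and on inspecting all the cases one sees that, as $r$ runs over the odd residues, these classes are precisely the pairs $\{-j,\,j-1\}\pmod{2^m}$ with $j=1,2,\dots,2^{m-1}$. For instance, when $m=4$ the pairs are $\{-1,0\},\{-2,1\},\{-3,2\},\{-4,3\},\{-5,4\},\{-6,5\},\{-7,6\},\{-8,7\}$, attached (in that order) to $r=1,3,7,13,5,15,11,9$; the cases $m=1,2,3$ are the evident truncations. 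Since $\{\,j-1 : 1\le j\le 2^{m-1}\,\}$ fills the bottom half of $\mathbb Z/2^m\mathbb Z$ and $\{\,-j : 1\le j\le 2^{m-1}\,\}$ fills the top half, these $2^{m-1}$ pairs partition $\mathbb Z/2^m\mathbb Z$. Consequently $a(\omega_n)\bmod 2^m$ is a well-defined function of the residue $n\bmod 2^m$ alone.

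Finally, since $n+2^m\equiv n\pmod{2^m}$, this immediately yields $a(\omega_{n+2^m})\equiv a(\omega_n)\pmod{2^m}$, which is exactly the corollary. The only place with any friction is checking that the residue pairs from (b)--(h) really do sweep out all of $\mathbb Z/2^m\mathbb Z$ (most visibly for $m=4$, with eight pairs to match against the eight odd residues); but this is a finite, mechanical check — and it is in any case forced automatically, because the statements of Theorem \ref{Th:4} are genuine equivalences and every $a(\omega_n)$ is odd. So I anticipate no real obstacle: the substance lies entirely in Theorem \ref{Th:4}, and Corollary \ref{Th:5} is merely its restatement via the periodicity of $n\mapsto n\bmod 2^m$.
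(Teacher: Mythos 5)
Your proposal is correct and is essentially the paper's (implicit) argument: the paper offers no separate proof, treating the corollary as an immediate consequence of Theorem~\ref{Th:4}, exactly as you do — the residue pairs in parts (b)–(h) partition $\mathbb{Z}/2^m\mathbb{Z}$, so $a(\omega_n)\bmod 2^m$ depends only on $n\bmod 2^m$, and periodicity follows. One small tightening: non-vanishing of $a(\omega_n)$ modulo $2^m$ does not by itself give oddness — cite part (a) (or (b)) with $m=1$ for that, or simply note that the partition property plus the ``if'' directions of (b)–(h) already pins down the value without any oddness argument.
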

\medskip

Using Mathematica package \texttt{RaduRK} developed by Nicolas Smoot \cite{Smoot}, we found the following generating function for $a(7n+2)$. 

\medskip
\begin{theorem}\label{Th:6}
    For $|q|<1$, we have:
    \begin{align*}
        \sum_{n=0}^\infty a(7n+2)\, q^n 
        = 7\left(\frac{1024\,f_2^8\,f_{14}^{18}}{f_1^{20}\,f_7^7}\,q^8 
        +\frac{1344\,f_2^9\,f_{14}^{11}}{f_1^{21}}\,q^6
        -\frac{1024\,f_2^{16}\,f_{14}^{10}}{f_1^{24}\, f_7^3}\,q^5
        +\frac{72\,f_2^{10}\,f_7^7\,f_{14}^4}{f_1^{22}}\,q^4 \right. \\
        \left. -\frac{320\,f_2^{17}\,f_7^4\,f_{14}^3}{f_1^{25}}\,q^3 
        -\frac{40\,f_2^{11}\,f_7^{14}}{f_1^{23}\,f_{14}^3}\,q^2
        +\frac{56\,f_2^{18}\, f_7^{11}}{f_1^{26}\, f_{14}^4}\,q
        +\frac{f_2^{12}\,f_7^{21}}{f_1^{24}\,f_{14}^{10}} \right),
    \end{align*}
    where 
$
f_a^b = (q^a; q^a)_\infty^b.
$
\end{theorem}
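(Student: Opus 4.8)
Since the identity was found with \texttt{RaduRK}, the cleanest proof is to recognize both sides as modular forms and then reduce the equality to a finite coefficient check via a Sturm-type bound; this is exactly the ``verification half'' that underlies Radu's Ramanujan--Kolberg algorithm. First I would record, via Euler's identity $(-q;q)_\infty = (q^2;q^2)_\infty/(q;q)_\infty = f_2/f_1$, that $\sum_{n\ge 0} a(n)q^n = f_2^2/f_1^3$, so that up to the rational power $q^{-1/24}$ the generating function is the eta-quotient $\eta(2\tau)^2/\eta(\tau)^3$, a weakly holomorphic modular form of weight $-\tfrac12$ on $\Gamma_0(2)$ with a known multiplier. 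Picking out the arithmetic progression $7n+2$ --- concretely $\sum_{n\ge 0} a(7n+2)q^{7n+2} = \tfrac17\sum_{j=0}^{6}\zeta_7^{-2j}F(\zeta_7^j q)$ with $F(q)=\sum a(n)q^n$ and $\zeta_7 = e^{2\pi i/7}$, followed by $q\mapsto q^{1/7}$ --- carries this to a weakly holomorphic modular form on $\Gamma_0(N)$ for an explicit level $N$ (the products $f_1,f_2,f_7,f_{14}$ appearing on the right show that $14\mid N$ suffices).

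Next I would treat the right-hand side, call it $B(q)$. Each of its eight eta-quotient summands has total eta-exponent $-1$: one checks $8+18-20-7 = 9+11-21 = -1$, and likewise for the remaining six, matching the eta-exponent of $f_2^2/f_1^3$. The plan is to fix an eta-quotient $g(\tau) = \prod_\delta \eta(\delta\tau)^{e_\delta}$ with the $e_\delta$ chosen large enough that $g(\tau)\cdot(\text{each summand of } B)$ is a \emph{holomorphic} eta-quotient on $\Gamma_0(N)$, and simultaneously that $g(\tau)\sum a(7n+2)q^n$ is holomorphic on $\Gamma_0(N)$. Whether a given eta-quotient is holomorphic (and of what weight, level, and character) is decided by Ligozat's order-at-the-cusps formula together with the standard criteria on the exponents, so this step is a finite, cusp-by-cusp inequality check. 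After it, both $g(\tau)\sum a(7n+2)q^n$ and $g(\tau)B(q)$ are holomorphic modular forms of the same positive weight $k$, level $N$, and character $\chi$.

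The difference $g(\tau)\bigl(\sum_{n\ge0} a(7n+2)q^n - B(q)\bigr)$ then lies in $M_k(\Gamma_0(N),\chi)$, so by Sturm's theorem it is identically zero once its first $\bigl\lfloor \tfrac{k}{12}[\,\mathrm{SL}_2(\mathbb Z):\Gamma_0(N)\,]\bigr\rfloor + 1$ Fourier coefficients vanish. I would therefore expand $\sum a(7n+2)q^n$ directly from $\sum a(n)q^n = f_2^2/f_1^3$, expand $B(q)$ from the stated product, and verify agreement up to this bound --- a routine computation, and the one performed (implicitly) by \texttt{RaduRK}. Dividing by $g(\tau)$ yields the theorem. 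The factor of $7$ visible in every coefficient on the right then gives $a(7n+2)\equiv 0\pmod 7$ as an immediate corollary.

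The main obstacle is the cusp analysis of $B(q)$: individual summands have genuine poles (from the $f_1^{20},f_1^{21},\dots$ and the $f_7,f_{14}$ in denominators), so one must either argue that the particular combination is holomorphic at each cusp or, more practically, choose $g$ and $N$ so that every summand is individually tamed --- and then correctly identify the resulting weight $k$, level $N$, and character $\chi$ so that the Sturm bound is simultaneously valid and finite. Everything downstream of that choice is bookkeeping. An alternative, purely $q$-series proof is possible in principle --- start from Ramanujan's $7$-dissection of $(q;q)_\infty$, the same tool behind $p(7n+5)\equiv0\pmod 7$, combine it with the $2$-dissection linking $f_2$ and $f_1$, and extract the progression $7n+2$ --- but the required manipulations are considerably heavier than the modular-forms verification.
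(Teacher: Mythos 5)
Your proposal is sound and is, at bottom, the same philosophy as the paper's proof -- reduce the claimed identity to modular function theory on $\Gamma_0(14)$ plus a finite computation -- but the finishing mechanism differs. The paper does not run a Sturm-bound argument by hand: it invokes Radu's Ramanujan--Kolberg algorithm through the \texttt{RaduRK} call \texttt{RK[14,2,\{-3,2\},7,2]}, whose output is an algebraic certificate: an explicit eta-quotient prefactor $f_1(q)$ such that $f_1(q)\sum a(7n+2)q^n$ lies in $M(\Gamma_0(14))$, a modular function $t$, a basis set $AB$, and polynomials $p_g(t)$ with $f_1(q)\sum a(7n+2)q^n=\sum_{g\in AB}p_g(t)\,g$; the theorem then follows by rearrangement. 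Your route -- multiply by an auxiliary eta-quotient $g$ chosen via Ligozat's cusp-order criteria so that both sides become holomorphic forms of the same weight, level and character, then compare coefficients up to the Sturm bound -- is a legitimate manual alternative and buys independence from the \texttt{RaduRK}/\texttt{4ti2} software, at the cost of doing the cusp analysis and a larger coefficient check yourself; the algorithmic certificate buys the correct prefactor and module representation automatically. One point in your sketch deserves tightening: the assertion that the $7$-dissection of $f_2^2/f_1^3$ lands in a space of weakly holomorphic forms of level divisible by $14$ is not justified by inspecting the eta-quotients on the right-hand side. It must come from the standard dissection/$U_7$ lemma for eta-quotients (this is precisely the content of Radu's construction, reflected in the paper by the prefactor $f_1(q)$ and the statement $f_1(q)\sum a(7n+2)q^n\in M(\Gamma_0(14))$), and since the underlying weight is half-integral you need the multiplier bookkeeping that this lemma supplies; with that ingredient made explicit, your argument closes.
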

\medskip

As a corollary of this theorem, we derive the following Ramanujan type congruence modulo $7$.

\medskip
\begin{corollary} \label{Th:7}
        For any non-negative integer $n$, we have:
        $$a(7n+2) \equiv 0 \pmod 7.$$
\end{corollary}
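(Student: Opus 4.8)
The plan is to derive Corollary \ref{Th:7} directly from the explicit generating function in Theorem \ref{Th:6}. Once that identity is in hand, the congruence is almost immediate: every term on the right-hand side carries an overall factor of $7$, and the remaining quantity inside the parentheses is a power series in $q$ with integer coefficients. So first I would verify that each of the eight summands, after the factor $7$ is pulled out, has an integer power series expansion. The numerical coefficients $1024, 1344, 1024, 72, 320, 40, 56, 1$ are all integers, so the only thing to check is that each ratio $f_2^{a}f_7^{b}f_{14}^{c}/(f_1^{d}f_7^{e}f_{14}^{g})$ (with the exponents as written) lies in $\mathbb{Z}[[q]]$. Since $1/f_1 = 1/(q;q)_\infty = \sum p(n) q^n$ has nonnegative integer coefficients, and $1/f_7$, $1/f_{14}$ likewise, while $f_2, f_7, f_{14}$ themselves are clearly in $\mathbb{Z}[[q]]$, every such ratio is a product of elements of $\mathbb{Z}[[q]]$ and hence is itself in $\mathbb{Z}[[q]]$. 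Therefore $\sum_{n\ge 0} a(7n+2) q^n \in 7\,\mathbb{Z}[[q]]$.

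Comparing coefficients of $q^n$ on both sides then yields $a(7n+2) \equiv 0 \pmod 7$ for every non-negative integer $n$, which is exactly the claimed congruence. The only subtlety worth spelling out is that the left-hand side $\sum_{n\ge 0} a(7n+2) q^n$ is obtained from $\sum_{n\ge 0} a(n) q^n = (-q;q)_\infty^2/(q;q)_\infty$ by the standard arithmetic-progression extraction (apply the operator that picks out exponents $\equiv 2 \pmod 7$, then substitute $q^7 \to q$), so its coefficients are exactly the $a(7n+2)$; this is what Theorem \ref{Th:6} asserts, and I would simply cite it.

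The genuine mathematical content — and hence the main obstacle — lies entirely in establishing Theorem \ref{Th:6} itself, not in this corollary. Theorem \ref{Th:6} is proved by the modular-function machinery implemented in Smoot's \texttt{RaduRK} package (based on Radu's algorithm for Ramanujan--Kolberg identities): one works in the space of modular functions on $\Gamma_0(N)$ for an appropriate level $N$ divisible by $7$ and by $2$ (here $N = 14$ or a small multiple), confirms that the relevant eta-quotient combination has the prescribed principal parts at all cusps, and checks the resulting finite linear identity to sufficiently high order. Granting Theorem \ref{Th:6}, the corollary is a one-line consequence as described. I would present the argument in exactly that order: state that the right side of Theorem \ref{Th:6} manifestly lies in $7\,\mathbb{Z}[[q]]$ because each eta-quotient summand is an integer power series, then extract coefficients to conclude.

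\begin{proof}[Proof of Corollary \ref{Th:7}]
By Theorem \ref{Th:6}, $\sum_{n=0}^\infty a(7n+2)\, q^n$ equals $7$ times a sum of eight terms, each of the shape $c \cdot q^j \cdot f_2^{a} f_7^{b} f_{14}^{c'} / \big(f_1^{d} f_7^{e} f_{14}^{g}\big)$ with $c \in \mathbb{Z}$ and all exponents non-negative integers. Since $f_a = (q^a;q^a)_\infty \in \mathbb{Z}[[q]]$ and $1/f_a = \sum_{n\ge 0} p(n) q^{an} \in \mathbb{Z}[[q]]$ for $a \in \{1,2,7,14\}$, each such term lies in $\mathbb{Z}[[q]]$, and therefore $\sum_{n=0}^\infty a(7n+2)\, q^n \in 7\,\mathbb{Z}[[q]]$. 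Comparing coefficients of $q^n$ gives $a(7n+2) \equiv 0 \pmod 7$ for every non-negative integer $n$.
\end{proof}
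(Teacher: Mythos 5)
Your proof is correct and follows the same route the paper intends: Corollary \ref{Th:7} is read off directly from Theorem \ref{Th:6}, since the right-hand side is $7$ times a sum of eta-quotients, each of which (together with its $q$-power prefactor) lies in $\mathbb{Z}[[q]]$ because both $f_a$ and $1/f_a$ have integer coefficients. Comparing coefficients then gives $a(7n+2)\equiv 0 \pmod 7$, exactly as the paper does implicitly.
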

\medskip

We remark the following identity.

\medskip
\begin{theorem}\label{Th:8}
    For $|q|<1$, it holds that
    \begin{align*}
        \frac{(q^2;q^2)_\infty^7\,(q^7;q^7)_\infty}{(q;q)_\infty^7\,(q^{14};q^{14})_\infty}
        -7q\,\frac{(q^7;q^7)_\infty^4}{(q;q)_\infty^4}
        +7q^3\,\frac{(q^{14};q^{14})_\infty^7}{(q;q)_\infty^3\,(q^2;q^2)_\infty\,(q^7;q^7)_\infty^3}=1.
    \end{align*}
\end{theorem}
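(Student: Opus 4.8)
The plan is to exhibit the claimed identity as the level-7 analogue of Theorem \ref{Th:3}, which is itself the "dissection" form of Ramanujan's congruence $p(5n+4)\equiv 0\pmod 5$. Write $f_k=(q^k;q^k)_\infty$, and let
$$
A=\frac{f_2^7 f_7}{f_1^7 f_{14}},\qquad B=\frac{f_7^4}{f_1^4},\qquad D=\frac{f_{14}^7}{f_1^3 f_2 f_7^3},
$$
so that the assertion is $A-7qB+7q^3D=1$. First I would observe that each of $A$, $B$, $D$ is (up to a power of $q$) an eta-quotient on $\Gamma_0(14)$ — one checks the standard Ligozat conditions on the orders of vanishing at the cusps of $X_0(14)$, which has genus $0$ and exactly four cusps ($0$, $1/2$, $1/7$, $\infty$). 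Since $X_0(14)$ has genus $0$, its function field is generated by a single Hauptmodul; a convenient choice is $t=q\,f_2 f_{14}^{?}/\dots$, or more practically one works directly with the three eta-quotients above, noting that they span a space of weight-$0$ modular functions whose only possible pole is at the cusp $\infty$ (resp. $0$).

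The key steps, in order, are: (1) verify that $A$, $qB$, $q^3D$ are holomorphic modular functions on $\Gamma_0(14)$ away from a single cusp, with controlled orders of vanishing there; (2) compute the $q$-expansion of the left-hand side $A-7qB+7q^3D$ up to the Sturm-type bound $1+\tfrac{[\mathrm{SL}_2(\mathbb Z):\Gamma_0(14)]}{12}\cdot(\text{weight})$ — here the weight is $0$ after clearing, so in practice one needs the constant term plus enough coefficients to cover the total order of the pole at the distinguished cusp, which is a small explicit number; (3) conclude that $A-7qB+7q^3D-1$, being a modular function on a genus-$0$ curve with no poles and vanishing $q$-expansion up to the bound, is identically $0$. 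Alternatively, and perhaps more in the spirit of Lemma \ref{Lm:1}, one can derive the identity directly from the $7$-dissection of $f_1=(q;q)_\infty$ due to Ramanujan (the level-$7$ analogue of the quintuple-type dissection used there): substitute the dissection into $f_1^{-3}$ or the relevant power, collect the terms in the residue class $q^{7n+\text{(appropriate)}}$, and recognize the three surviving eta-quotients. The factor $7$ and the specific shifts $q^0, q^1, q^3$ then emerge exactly as the coefficients $1,-7,7$.

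The main obstacle I anticipate is step (2)/(3): pinning down the precise order of vanishing of $A-7qB+7q^3D-1$ at the cusp $0$ (or whichever cusp carries the pole), so that a finite, verifiable number of $q$-coefficients suffices. One must be careful that $A$ itself has a nonzero constant term (indeed $A=1+O(q)$, matching the right-hand side), while $qB$ and $q^3D$ contribute only higher-order terms; thus the genuinely nontrivial content is that all coefficients of $q^1, q^2, q^3,\dots$ in $A$ are cancelled by $-7qB+7q^3D$. Showing this via modular forms reduces to a bounded coefficient check; showing it via the dissection approach reduces to bookkeeping that the non-principal parts of the $7$-dissection of the relevant eta-power match $7B$ and $7D$ term-for-term. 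I would lean toward presenting the dissection proof, as it parallels Lemma \ref{Lm:1} and Theorem \ref{Th:3} and makes the "from crank to congruences" narrative self-contained; the modular-forms verification can be relegated to a remark as an independent check.
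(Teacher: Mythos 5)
There is a genuine gap, and it sits exactly where the paper does its real work. First, your structural claim about the underlying curve is wrong: $X_0(14)$ has genus $1$, not $0$, so $\Gamma_0(14)$ admits no Hauptmodul, and every step of your plan that leans on a genus-$0$ function field collapses. (The paper's proof in fact uses the genus-$1$ property in the opposite direction: after combining the three eta-quotients and the constant $1$ so as to obtain a weight-$0$ function with at worst a single simple pole at one cusp and a zero at another, it concludes that this function must vanish identically precisely because a genus-$1$ curve has no hauptmodul; that forces a nontrivial linear relation, which is then pinned down by comparing $q$-coefficients up to $q^3$.) Second, your premise that each of $A$, $qB$, $q^3D$ has its ``only possible pole'' at one distinguished cusp is false: computing the orders at the four cusps of $\Gamma_0(14)$ (the paper lists them, ordered by the Atkin--Lehner involutions $W_1,W_2,W_7,W_{14}$, as $(0,0,2,-2)$, $(1,2,-1,-2)$ and $(3,0,-1,-2)$) shows that the second and third terms have poles at \emph{two} distinct cusps. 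The nontrivial content of the identity is exactly that these poles cancel in the combination $A-7qB+7q^3D$; such cancellation at cusps other than $\infty$ cannot be read off from the $q$-expansion at $\infty$, so ``verify coefficients up to a Sturm-type bound'' does not close the argument for weight-$0$ functions unless you first bound the total polar degree over \emph{all} cusps --- which is the cusp analysis your sketch omits, and your quoted Sturm bound (with weight $0$) is vacuous as stated.

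A repairable version of your plan does exist: multiply the identity by a suitable eta-quotient so that all terms become holomorphic modular forms of one and the same positive weight on $\Gamma_0(14)$ (holomorphy at all four cusps checked via the Ligozat order formula), then apply Sturm's theorem with the explicit bound for that weight and level; this would be a legitimate, more computational alternative to the paper's pole-counting and genus argument. Your second suggestion, deriving the identity from a $7$-dissection of $(q;q)_\infty$ in analogy with Lemma~\ref{Lm:1}, is only a gesture: Theorem~\ref{Th:8} is not itself a dissection statement, and you would need to specify exactly which dissection is substituted into which eta-power and exhibit the bookkeeping that produces the coefficients $1,-7,7$; as written this route cannot be assessed.
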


\medskip

 In the following sections, we present rigorous proofs and supporting results for our theorems. 
	Section \ref{S2} establishes Theorem~\ref{Th:1} using classical $q$-series identities and quintisection expansions. 
	In Section \ref{S3}, we derive and analyze Theorem~\ref{Th:3}, a modular identity crucial for simplifying the generating function of the crank parity difference. 
	Section \ref{S4} explores the arithmetic properties and combinatorial significance of the sequence $a(n)$, which emerges naturally in this context. 
	Section \ref{S5} proves Theorem~\ref{Th:6} via an algorithmic approach grounded in the theory of modular functions, using the \texttt{RaduRK} package in Mathematica to implement Radu’s Ramanujan-Kolberg algorithm. 
	In Section \ref{S6}, we establish Theorem~\ref{Th:8} by examining the behavior of eta-quotients at cusps, leveraging classical modular form theory and Atkin-Lehner involutions. 
	Finally, Section \ref{S7} discusses several applications of Theorem~\ref{Th:1}.

\section{Proof of Theorem \ref{Th:1}} \label{S2}
We split the following relevant sum via $5$-section (i.e., based on powers of $q$ modulo $5$)
\begin{align*}
 \sum_{n = -\infty}^{\infty}(-1)^{n}q^{n(3n + 1)/2} =\phi(q) &=  e_{0} + e_{1} + e_{2} + e_{3} + e_{4}, \\
\phi^2(q^2) &= g_{0} + g_{1} + g_{2} + g_{3} + g_{4}, \\
 \sum_{n = 0}^{\infty}(-1)^{n}(2n + 1)q^{n(n + 1)/2} = \phi^{3}(q) &=   g_{0} + g_{1} + g_{2} + g_{3} + g_{4}, \\
 \sum_{n = 0}^{\infty}u(n)q^{n} = \frac{\phi^3(q)}{\phi^2(q^2)} &=  P_{0} + P_{1} + P_{2} + P_{3} + P_{4}.
\end{align*}
First of all note that by definition of $e_{s}$ we have $e_{3} = e_{4} = 0$ (because $n(3n + 1)/2$ is 
never equal to $3$ or $4$ modulo $5$). Similarly $n(n + 1)/2$ is never equal to $\pm 2$ modulo $5$ and hence $h_{2}= h_{4}= 0$. 

\smallskip
\noindent
Multiplying $\frac{\phi^3(q)}{\phi^2(q^2)}$ and $\phi^2(q^2)$, in the above equations, and combining terms according to powers of $q$ modulo $5$ we get the following set of equations
\begin{align}
g_{0}P_{0} + g_{4}P_{1} + g_{3}P_{2} + g_{2}P_{3} + g_{1}P_{4} &= h_0\notag\\
g_{1}P_{0} + g_{0}P_{1} + g_{4}P_{2} + g_{3}P_{3} + g_{2}P_{4} &= h_1\notag\\
g_{2}P_{0} + g_{1}P_{1} + g_{0}P_{2} + g_{4}P_{3} + g_{3}P_{4} &= 0\notag\\
g_{3}P_{0} + g_{2}P_{1} + g_{1}P_{2} + g_{0}P_{3} + g_{4}P_{4} &= h_3\notag\\
g_{4}P_{0} + g_{3}P_{1} + g_{2}P_{2} + g_{1}P_{3} + g_{0}P_{4} &= 0\notag
\end{align}
Our goal to is calculate $P_{4}$ which is given by $P_{4} = D_{4}/D$ where $D_{4}$ and $D$ are determinants given by
$$D = \begin{vmatrix}
g_{0} & g_{4} & g_{3} & g_{2} & g_{1}\\ 
g_{1} & g_{0} & g_{4} & g_{3} & g_{2}\\ 
g_{2} & g_{1} & g_{0} & g_{4} & g_{3}\\ 
g_{3} & g_{2} & g_{1} & g_{0} & g_{4}\\ 
g_{4} & g_{3} & g_{2} & g_{1} & g_{0}\end{vmatrix} \qquad \text{and} \qquad 
D_4 = \begin{vmatrix}g_{0} & g_{4} & g_{3} & g_{2} & h_0\\ 
g_{1} & g_{0} & g_{4} & g_{3} & h_1\\ 
g_{2} & g_{1} & g_{0} & g_{4} & 0 \\ 
g_{3} & g_{2} & g_{1} & g_{0} & h_3 \\ 
g_{4} & g_{3} & g_{2} & g_{1} & 0\end{vmatrix}.$$
The evaluation of determinant $D$ is aided by the fact that it is the determinant of a \emph{circulant matrix}, call it $A$. The determinant of a square matrix is the product of its eigenvalues and it is easy to find the eigenvalues of a circulant matrix. If $\omega$ is a $5^{\text{th}}$ root of unity (including $1$) then 
$$g_{0} + \omega g_{1} + \omega^{2}g_{2} + \omega^{3}g_{3} + \omega^{4}g_{4}$$ 
is an eigenvalue of $A$. Thus if $\omega$ is a primitive $5^{\text{th}}$ root of unity then 
$$\lambda_{t} = g_{0} + \omega^{t} g_{1} + \omega^{2t}g_{2} + \omega^{3t}g_{3} + \omega^{4t}g_{4}$$ 
gives all the eigenvalues of $A$ for $t = 0, 1, 2, 3, 4$. The determinant $D$ is therefore given by 
$$D = \prod_{t = 0}^{4}(g_{0} + \omega^{t} g_{1} + \omega^{2t}g_{2} + \omega^{3t}g_{3} + \omega^{4t}g_{4})
= \prod_{t = 0}^{4}\sum_{s = 0}^{4}\omega^{st}g_{s}$$ 
From the definition of $g_{s} = g_{s}(q)$ we can easily see that $\omega^{st}g_{s}(q) = g_{s}(\omega^{t}q)$ and hence
\begin{align}
D &= \prod_{t = 0}^{4}\sum_{s = 0}^{4}\omega^{st}g_{s} = \prod_{t = 0}^{4}\sum_{s = 0}^{4}g_{s}(\omega^{t}q) 
= \prod_{t = 0}^{4}\phi^2(\omega^{2t}q^2)\notag\\
&= \prod_{t = 0}^{4}\prod_{n = 1}^{\infty}(1 - \omega^{2nt}q^{2n})^2 
= \prod_{n = 1}^{\infty}\prod_{t = 0}^{4}(1 - \omega^{2tn}q^{2n})^2 \notag\\
&= \prod_{n \not\equiv 0\pmod{5}}(1 - q^{10n})^2 \prod_{n \equiv 0\pmod{5}}(1 - q^{2n})^{10}  \notag \\
&= \frac{\phi^{12}(q^{10})}{\phi(q^{50})^2}.
\end{align}
From the above calculations, we can see that 
$$\sum_{n = 0}^{\infty}u(5n + 4)q^{5n + 4} = P_{4} = \frac{D_{4}}{D} = D_4\cdot \frac{\phi(q^{50})^2}{\phi^{12}(q^{10})}.$$ 
The \emph{matrix determinant lemma} \cite{Ding07} states that if $A$ is a matrix, $v$ is a vector ($v^T$ its transpose) and $z$ is any indeterminate, then
$$\begin{vmatrix} v & A  \\ z&  v^{T}  \end{vmatrix}= v^T\text{adj}(A)\, v - z\, \vert A\vert.$$
Based on this fact, we compute
\begin{align*}
\delta(g_0,g_1,g_2,g_3,g_4): &= \begin{vmatrix}
g_1&g_0&g_4&g_3 \\
g_2&g_1&g_0&g_4 \\
g_3&g_2&g_1&g_0 \\
g_4&g_3&g_2&g_1
\end{vmatrix}
= - \begin{vmatrix}
{\color{red}g_1}&g_3&g_4&g_0 \\
{\color{red}g_2}&g_4&g_0&g_1 \\
{\color{red}g_3}&g_0&g_1&g_2 \\
g_4&{\color{red}g_1}&{\color{red}g_2}&{\color{red}g_3}
\end{vmatrix}  \\
&=- [{\color{red}g_1\, g_2\, g_3}] 
\begin{bmatrix}
g_0g_2-g_1^2 & g_1g_0-g_4g_2 & g_1g_4-g_0^2 \\
g_1g_0-g_4g_2 & g_3g_2-g_0^2 & g_0g_4-g_1g_3 \\
g_1g_4-g_0^2 & g_0g_4-g_1g_3 & g_0g_3-g_4^2
\end{bmatrix} \begin{bmatrix} {\color{red} g_1}  \\  {\color{red} g_2}  \\ {\color{red}  g_3 }\end{bmatrix} 
+ g_4\, \vert A\vert
\end{align*}
where $\vert A\vert = g_0g_2g_3+ 2g_0g_1g_4 -g_0^3 -g_1^2g_3-g_2g_4^2$.

\bigskip
\noindent
In view of this, the determinant $D_4$ can be given in the form
$$D_4
=h_0\cdot \delta(g_0,g_1,g_2,g_3,g_4)+h_1\cdot \delta(g_1,g_2,g_3,g_4,g_0)+h_3\cdot \delta(g_3,g_4,g_0,g_1,g_2).
$$
Finally, the evaluation of the determinant $D_{4}$ runs through the explicit expressions for $g_s$ and $h_s$ as given by Lemma~\ref{Lm:1}, in combination with routine simplifications,
which leads to the desired result.

\section{Proof of Theorem \ref{Th:3}} \label{S3}
Recall the \emph{Dedekind eta function} $\eta(q):=q^{\frac1{24}}\prod_{k\geq1}(1-q^k)$. We prove the equivalent form 
$$\frac{\eta(q^2)^3\cdot \eta(q^{10})}{\eta(q)\cdot \eta(q^5)^5}
 - \frac{\eta(q)\eta(q^{10})^5}{\eta(q^2)\cdot \eta(q^5)^5}=1$$
presented as an identity between eta-quotients, in $M_0(\Gamma_0(10))$, a weight $0$ level $10$ modular form. Both expressions on the left-hand side have a simple pole at the cusp 
$\frac15$ (under the image of the \emph{Atkin-Lehner involution} $W_2$ \cite{Atkin70}) and no other poles. This means there must be a linear combination of them which is constant, so just checking the constant and the vanishing of one other coefficient is enough.  

\smallskip
\noindent
Also, if you act by $W_2$, those eta-quotients become \emph{hauptmodln} for the congruence group $\Gamma_0(10)$ listed by Conway and Norton \cite{Conway79}, which must differ only by a constant. 

  \section{Proof of Theorem \ref{Th:4}} \label{S4}

To establish our theorem, we turn to two foundational results: Euler’s pentagonal number theorem 
 \begin{align*}
    (q;q)_\infty = \sum_{n=0}^\infty (-1)^{n(n+1)/2}\,q^{\omega_n} 
 \end{align*}
and a classical theta identity attributed to Gauss
  \begin{align}
    \frac{(q;q)_\infty}{(-q;q)_\infty} =1+2 \sum_{n=\infty}^\infty (-1)^n\,q^{n^2}. \label{eq:Gauss}
 \end{align}
These identities form the backbone of our approach to proving the theorem.

    \paragraph{Case $m=1$.}  Given that 
    $$
    \frac{(-q;q)_\infty}{(q;q)_\infty} = \left( 1+2 \sum_{n=\infty}^\infty (-1)^n\,q^{n^2} \right)^{-1}  \equiv 1 \pmod 2.
    $$
    we proceed as follows:
    \begin{align*}
        \sum_{n=0}^\infty a(n)\,q^n &= \frac{(-q;q)_\infty}{(q;q)_\infty}\cdot (-q;q)_\infty 
         \equiv (-q;q) \pmod 2
         \equiv  (q;q)_\infty \pmod 2 \\
        & \equiv \sum_{n=0}^\infty (-1)^{n(n+1)/2}\,q^{\omega_n} \pmod 2.
    \end{align*}

     \paragraph{Case $m=2$.} Expanding the inverse of \eqref{eq:Gauss} modulo $4$, we find
$$
\frac{(-q;q)_\infty}{(q;q)_\infty} \equiv 1 - 2 \sum_{n=1}^\infty (-1)^n\, q^{n^2} \equiv 1 + 2 \sum_{n=1}^\infty (-1)^n\, q^{n^2} \pmod{4}.
$$
Thus, we conclude that
$$
\frac{(-q;q)_\infty}{(q;q)_\infty} \equiv \frac{(q;q)_\infty}{(-q;q)_\infty} \pmod{4}.
$$
Now, we consider:
\begin{align*}
    \sum_{n=0}^\infty a(n)\,q^n &= \frac{(-q;q)_\infty}{(q;q)_\infty} \cdot (-q;q)_\infty \\
    &\equiv \frac{(q;q)_\infty \cdot (-q;q)_\infty}{(-q;q)_\infty} \pmod{4} \\
    &\equiv (q;q)_\infty \pmod{4}.
\end{align*}
This reasoning shows that
$$\sum_{n=0}^\infty a(n)\, q^n \equiv \sum_{n=0}^\infty (-1)^{n(n+1)/2}\,q^{\omega_n}  \pmod{4},$$ 
concluding the proof.

    \paragraph{Case $m=3$.} Expanding the inverse of \eqref{eq:Gauss} modulo $8$, we obtain
    \begin{align*}
        \frac{(-q;q)_\infty}{(q;q)_\infty} 
        & \equiv 1-2\sum_{n=1}^\infty (-1)^n\,q^{n^2} + \left( 2\sum_{n=1}^\infty (-1)^n\,q^{n^2} \right)^2  \pmod 8 \\
        & \equiv 1-2\sum_{n=1}^\infty (-1)^n\,q^{n^2} +4 \sum_{n=1}^\infty q^{2n^2} \pmod 8 \\
        & \equiv 1-2\sum_{n=1}^\infty (-1)^n\,q^{n^2} +4 \sum_{n=1}^\infty (-1)^n\,q^{2n^2} \pmod 8.
    \end{align*}
    This allows us to express
    \begin{align*}
         \frac{(-q;q)_\infty}{(q;q)_\infty} 
         & \equiv 2\,\frac{(q^2;q^2)_\infty}{(-q^2;q^2)_\infty} - \frac{(q;q)_\infty}{(-q;q)_\infty} \pmod 8.
    \end{align*}
    Then
    \begin{align*}
        \sum_{n=0}^\infty a(n)\,q^n 
        &= \frac{(-q;q)_\infty}{(q;q)_\infty}\cdot (-q;q)_\infty \\
        & \equiv 2\,\frac{(q^2;q^2)_\infty\,(-q;q)_\infty}{(-q^2;q^2)_\infty} - (q;q)_\infty \pmod 8\\
        & \equiv 2\,(q^2;q^2)_\infty\,(-q;q^2)_\infty - (q;q)_\infty \pmod 8\\
        & \equiv 2\,\frac{(q^2;q^2)_\infty}{(q;-q)_\infty} - (q;q)_\infty \pmod 8\\
        & \equiv 2\,(-q;-q)_\infty- (q;q)_\infty \pmod 8.
    \end{align*}
In other words, we have shown that
$$
\sum_{n=0}^\infty a(n)\, q^n \equiv \sum_{n=0}^\infty  (-1)^{n(n+1)/2}\,\big(2\,(-1)^{\omega_n} - 1 \big)\, q^{\omega_n} \pmod{8},
$$
thereby concluding the proof.
    
    \paragraph{Case $m=4$.} To summarize, we have:
\begin{align*}
\frac{(-q;q)_\infty^2}{(q;q)_\infty^2} 
&= \left( 1 + 2 \sum_{n=1}^\infty (-1)^n q^{n^2} \right)^{-2}
= \left(1 + 4 \sum_{n=1}^\infty (-1)^n q^{n^2} + \left( 2 \sum_{n=1}^\infty (-1)^n q^{n^2} \right)^2 \right)^{-1}.
\end{align*}
Modulo $16$, this simplifies to
    \begin{align*}
        \frac{(-q;q)_\infty^2}{(q;q)_\infty^2} 
        & \equiv 1-4\sum_{n=1}^\infty (-1)^n\,q^{n^2} - \left( 2\sum_{n=1}^\infty (-1)^n\,q^{n^2} \right)^2  \pmod {16} \\
        & \equiv 3-\frac{2\,(q;q)_\infty}{(-q;q)_\infty}-\left(\frac{(q;q)_\infty}{(-q;q)_\infty}-1 \right)^2 \pmod {16} \\
        & \equiv 2-\frac{(q;q)_\infty^2}{(-q;q)_\infty^2} \pmod {16}.
    \end{align*}
Next, we consider
$$
\sum_{n=0}^\infty a(n)\,q^n = \frac{(-q;q)_\infty^2}{(q;q)_\infty^2} \cdot (q;q)_\infty.
$$
Substituting, we find
$$
\sum_{n=0}^\infty a(n)\, q^n \equiv 2\, (q;q)_\infty - \frac{(q;q)_\infty^3}{(-q;q)_\infty^2} \pmod{16}.
$$
Considering \cite[eq. (32.6)]{Fine}), this yields
\begin{align*}
\sum_{n=0}^\infty a(n)\, q^n 
& \equiv 2 \sum_{n=-\infty}^\infty (-1)^n\, q^{n(3n-1)/2} + \sum_{n=-\infty}^\infty (6n-1)\, q^{n(3n-1)/2} \pmod{16}\\
& \equiv \sum_{n=-\infty}^\infty \left(6n - 1 + 2\,(-1)^n \right) q^{n(3n-1)/2}\pmod{16}.
\end{align*}
Thus, we conclude
$$\sum_{n=0}^\infty a(n)\, q^n \equiv \sum_{n=0}^\infty \bigg( 2\,(-1)^{n(n+1)/2} - (-1)^n\,(3n+1) + \frac{1-(-1)^n}{2} \bigg) \, q^{\omega_n} \pmod{16}.$$
This completes the proof.

\section{Proof of Theorem \ref{Th:6}} \label{S5}

Currently, this type of identity can be proven using computer algebra systems that implement Radu's Ramanujan-Kolberg algorithm \cite{Radu}. We make use of the Mathematica package \texttt{RaduRK}, developed by Nicolas Smoot \cite{Smoot}, which is known for its ease of use. The \texttt{RaduRK} package depends on \texttt{4ti2}, a software suite designed to address algebraic, geometric, and combinatorial problems involving linear spaces. To use the package, we follow the installation instructions outlined in \cite{Smoot} and activate it within a Mathematica session using the following command:
$$\text{\texttt{<<RaduRK`}}$$
It is essential to define the values of the two primary global variables, $q$ and $t$, before executing the program:
$$\text{\texttt{\{SetVar1[q],SetVar2[t]\}}}$$
The algorithmic verification of our identity is accomplished through the following procedure call:
$$\texttt{RK[14,2,\{-3,2\},7,2]}$$
Smoot's package presents the proof in the following format:
\allowdisplaybreaks{
	\begin{doublespace}
	\begin{align*}
	& \begin{array}{c|c}
	\texttt{N:} & 14 \\
	\hline
	\{\texttt{M,}(\texttt{r}_\delta)_{\delta|\texttt{M}}\}\texttt{:} & \{2,\{-3,2\}\} \\
	\hline
	\texttt{m:} & 7 \\
	\hline
	\texttt{P}_{\texttt{m,r}}(\texttt{j})\texttt{:} & \{2\} \\ 
	\hline \\ [-3.5ex]
	 \texttt{f}_\texttt{1}(\texttt{q})\texttt{:} & \dfrac{(q;q)_\infty^{20}\, (q^7;q^7)_\infty^7}{q^8\, (q^{2};q^{2})_\infty^8\, (q^{14};q^{14})_\infty^{18}} \\ [2ex]
	\hline \\ [-3.5ex]
	\texttt{t:} & \dfrac{(q^2;q^2)_\infty\, (q^7;q^7)_\infty^7}{q^2\, (q;q)_\infty\, (q^{14};q^{14})_\infty^7} \\ [2ex]
	\hline \\ [-3.5ex]
	 \texttt{AB:} & \big\{1,\dfrac{(q^2;q^2)_\infty^8\, (q^7;q^7)_\infty^4}{q^3\, (q;q)_\infty^4\, (q^{14};q^{14})_\infty^8} - \dfrac{4\,(q^2;q^2)_\infty\, (q^7;q^7)_\infty^7 }{q^2\, (q;q)_\infty\, (q^{14};q^{14})_\infty^7}\big\} \\ [2ex]
	\hline
	\{\texttt{p}_\texttt{g}(\texttt{t})\texttt{:g}\in \texttt{AB}\}\texttt{:} & \left\{7168 - 19264 t - 8456 t^2 + 1288 t^3 + 7 t^4,-7168 - 2240 t + 392 t^2\right\} \\
	\hline
	\texttt{Common Factor:} & 7 \\
	\end{array}
	\end{align*}
\end{doublespace}
}
As outlined in \cite{Andrews22}, the output can be interpreted as follows:
\begin{enumerate}
\item[$\bullet$] The first parameter in the procedure call $\texttt{RK[14,2,\{-3,2\},7,2]}$ sets $N = 14$, thereby defining the space of modular functions that the program will utilize:
$$
M(\Gamma_0(N)) := \text{the algebra of modular functions for } \Gamma_0(N).
$$
For detailed definitions of concepts like $\Gamma_0(N)$ and $M(\Gamma_0(N))$, as well as a thorough explanation of Radu's Ramanujan-Kolberg algorithm, please consult \cite{Paule}.
    \item[$\bullet$] The assignment $\{M, (r_\delta)_{\delta|M}\} = \{2, (-3, 2)\}$ is derived from the second and third entries of the procedure call $\texttt{RK[14,2,\{-3,2\},7,2]}$. This specifies $M = 2$ and $(r_\delta)_{\delta|2} = (r_1, r_2) = (-3, 2)$, such that
    $$
    \sum_{n=0}^\infty a(n)\, q^n = \prod_{\delta | \texttt M} (q^\delta;q^\delta)_\infty^{r_\delta} = \frac{(q^2;q^2)_\infty^2}{ (q;q)_\infty^3}.
    $$
    In the output expression $P_{m,r}(j)$ the abbreviation $r := (r_\delta)_{\delta|M}$ is used; i.e., here $r =(-3,2)$.
    \item[$\bullet$] The final two parameters in the procedure call $\texttt{RK[14,2,\{-3,2\},7,2]}$ correspond to the assignments $m = 7$ and $j = 2$, highlighting our emphasis on the generating function:
$$
\sum_{n=0}^\infty a(mn+j)\,q^n = \sum_{n=0}^\infty a(7n+2)\,q^n.
$$
The parameters $m$ and $j$ are utilized in the output expression $P_{m,r}(j)$; in this case, it is represented as $P_{7,r}(2)$, with $r = (-3, 2)$.
\item[$\bullet$]  The output $P_{m,r}(j) = P_{7,(-3,2)}(2) = \{2\}$ indicates the existence of an infinite product:
$$
f_1(q)=\frac{(q;q)_\infty^{20}\, (q^7;q^7)_\infty^7}{q^8\, (q^{2};q^{2})_\infty^8\, (q^{14};q^{14})_\infty^{18}}
$$
such that
$$
f_1(q)\sum_{n=0}^\infty a(7n+2)\,q^n \in M(\Gamma_0(N)),\quad\text{with}\quad N=14.
$$
\item[$\bullet$]  The output
\begin{align}
    & t=\frac{(q^2;q^2)_\infty\, (q^7;q^7)_\infty^7}{q^2\, (q;q)_\infty\, (q^{14};q^{14})_\infty^7},\notag \\ 
    & AB=\big\{1,\frac{(q^2;q^2)_\infty^8\, (q^7;q^7)_\infty^4}{q^3\, (q;q)_\infty^4\, (q^{14};q^{14})_\infty^8} - \dfrac{4\,(q^2;q^2)_\infty\, (q^7;q^7)_\infty^7 }{q^2\, (q;q)_\infty\, (q^{14};q^{14})_\infty^7}\big\},\notag \\
    & \{p_g(t):g\in AB\} = \left\{7168 - 19264 t - 8456 t^2 + 1288 t^3 + 7 t^4,-7168 - 2240 t + 392 t^2\right\}\label{eq5.2}
\end{align}
provides a solution to the following objective: 
find a modular function $t \in M(\Gamma_0(N))$ and
polynomials $p_g(t)$ such that
\begin{align}
    f_1(q)\sum_{n=0}^\infty a(7n+2)\,q^n  = \sum_{g\in AB} p_g(t)\cdot g.
\end{align}
Generally, the elements of the finite set $AB$ form a $\mathbb C[t]$-module basis of $M(\Gamma_0(N))$,
resp. of a large subspace of $M(\Gamma_0(N))$. 
The elements $g$ belonging to the set $AB$ are $\mathbb C$-linear combinations
of modular functions in $M(\Gamma_0(N))$ which are representable in infinite product form such
as $f_1(q)$ and $t$.
 In our case, the program delivers \eqref{eq5.2}, which
means
\begin{align*}
     & \frac{f_1^{20}\, f_7^7}{q^8 f_{2}^8\, f_{14}^{18}} \sum_{n=0}^\infty a(7n+2)\,q^n \\
     & \qquad = 7168 - 19264\, \frac{f_2\, f_7^7}{q^2\, f_1\, f_{14}^7} - 8456 \left( \frac{f_2\, f_7^7}{q^2\, f_1\, f_{14}^7} \right)^2 + 1288 \left( \frac{f_2\, f_7^7}{q^2\, f_1\, f_{14}^7} \right)^3 + 7 \left( \frac{f_2\, f_7^7}{q^2\, f_1\, f_{14}^7} \right)^4\\
     & \qquad\qquad\qquad\quad + \left( \frac{f_2^8\, f_7^4}{q^3\, f_1^4\, f_{14}^8} - \frac{4\,f_2\, f_7^7 }{q^2\, f_1\, f_{14}^7} \right) \left(-7168 - 2240\,\frac{f_2\, f_7^7}{q^2\, f_1\, f_{14}^7} + 392 \left( \frac{f_2\, f_7^7}{q^2\, f_1\, f_{14}^7} \right)^2 \right).
\end{align*}
This yields our identity on rearrangement.
\end{enumerate}

\section{Proof of Theorem \ref{Th:8}} \label{S6}
We prove the equivalent form 
$$\frac{\eta(q^2)^7\cdot \eta(q^7)}{\eta(q)^7\cdot \eta(q^{14})} - 7\, \frac{\eta(q^{7})^4}{\eta(q)^4}+7\, \frac{\eta(q^{14})^7}{\eta(q)^3\cdot\eta(q^2)\cdot\eta(q^7)^3}=1$$
in $M_0(\Gamma_0(14))$, which is a weight $0$ level $14$ modular form of an identity between eta-quotients. For background on such calculations see ~\cite[p. 18]{Ono04}.
Write the above identity as $f_1+f_2+f_3=1$.

\smallskip
\noindent
For each eta quotient, $f$, we associate a $4$-tuple $(a,b,c,d)$ giving the order of vanishing at each cusp, ordered by the Atkin-Lehner involutions $(W_1,W_2, W_7, W_{14})$ 
\cite{Atkin70}.
The $f_1$ gives $(0,0,2,-2)$, the $f_2$ gives  $(1,2,-1,-2)$, and $f_3$ gives $(3,0,-1,-2)$. Of course we can also associate a $4$-tuple to the constant function 1: $(0,0,0,0)$ Since these functions have no other poles, there is a non-trivial linear combination $g$ with orders at least $(1,0,0,-1)$. Here we could use a combination of $f_2$ and $f_3$ to get something with no pole under $W_7$, and then a multiple of $f_1$ to reduce the order of pole under $W_14$ to at least $-1$, and then subtract a constant to get vanishing at infinity. 
This function $g$ is either $0$, or $g\vert\,W_14 $is a hauptmodl (a meromorphic weight $0$ functions with a single pole at infinity). However, the modular curve $X_0(14)$ has genus $1$, so it cannot have a haputmodul. Thus $g$ is $0$. 

\smallskip
\noindent
This argument shows that there is a non-trivial relation between the functions. Obviously $1, f_2$ and $f_3$ are independent by $q$-expansion, and so $f_1$ can be found in terms of them by comparing the coefficients up to $q^3$.  We arrived at the conclusion.

\section{Applications of Theorem \ref{Th:1}} \label{S7}

\subsection{Euler's pentagonal number theorem}

This section is devoted to listing a number of applications to Theorem \ref{Th:1}. To minimize unduly replications, we only offer proofs to selected representatives of our results.

Denote $\Delta_k=\frac{k(k+1)}2$. Considering the theta series \cite[Eq. (0.44), p. 16]{Cooper}
\begin{align*}
(q;q)_\infty = \sum_{n=-\infty}^\infty (-1)^n\,q^{\omega_n},
\end{align*}
we derive the following corollaries.

\medskip
\begin{corollary} \label{Cor1} Let $n$ be a non-negative integer.
\begin{enumerate}
    \item [(a)] If $n\equiv 0 \pmod 2$, then $\sum\limits_{k\in\mathbb{Z}}C(5n+4-5\omega_k)\equiv 1 \pmod 2$  $\iff$  $n\in\{20\omega_j\vert j\geq0\}$.
    \item [(b)] If $n\equiv 0 \pmod 8$, then $\sum\limits_{k\in\mathbb{Z}}C(5n+4-25\omega_k)\equiv 1 \pmod 2$  $\iff$  $n\in\{8\Delta_j\vert j\geq0\}$.
    \item [(c)] If $n\equiv 4 \pmod 8$, then $\sum\limits_{k\in\mathbb{Z}}C(5n+4-25\omega_k)\equiv 1 \pmod 2$  $\iff$  $n\in\{40\Delta_j+4\vert j\geq0\}$.
\end{enumerate}
\end{corollary}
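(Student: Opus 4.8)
Here is the plan. The idea is to turn each of (a)--(c) into a statement about the parity of the coefficients of one explicit $q$-product furnished by Theorem~\ref{Th:1}, and then to extract that parity from the $2$-adic shape of an associated theta series. Throughout set $f_a:=(q^a;q^a)_\infty$ and $\psi(q):=\sum_{j\ge 0}q^{\Delta_j}=f_2^2/f_1$. First I would pass to generating functions: replacing $q^5$ by $q$ in Theorem~\ref{Th:1} gives
$$
G(q):=\sum_{m\ge 0}C(5m+4)\,q^m=5\,\frac{f_1^2\,f_5\,f_{10}^2}{f_2^4}.
$$
Since $5n+4-5\omega_k=5(n-\omega_k)+4$ and $5n+4-25\omega_k=5(n-5\omega_k)+4$, and $C$ vanishes on negative integers, the inner sum in (a) is the $q^n$-coefficient of $G(q)\,\Theta(q)$ while the inner sum in (b)--(c) is the $q^n$-coefficient of $G(q)\,\Theta(q^5)$, where $\Theta(q):=\sum_{k\in\mathbb Z}q^{\omega_k}$; by Euler's pentagonal number theorem $\Theta(q)\equiv f_1\pmod2$.

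Next I would reduce modulo $2$. Using $5\equiv1$, the Frobenius congruence $f_a^2\equiv f_{2a}\pmod2$ (so in particular $f_1^2/f_2\equiv1$), Jacobi's identity in the form $f_a^3\equiv\psi(q^a)\pmod2$, and the fact that $S(q)^2\equiv S(q^2)\pmod2$ for any power series $S$, the two products collapse to
$$
G(q)\,\Theta(q)\equiv \psi(q)\,\psi(q^5)\,\frac{f_{10}}{f_8}\pmod2,\qquad
G(q)\,\Theta(q^5)\equiv\frac{\psi(q^{10})}{\psi(q^2)}\pmod2.
$$
The second expression is manifestly a function of $q^2$, consistent with the hypotheses $n\equiv0,4\pmod8$. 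It is also worth recording that $\sum_n C(n)q^n=2q+f_1^3/f_2^2\equiv 1/f_1\pmod2$, i.e.\ $C(n)\equiv p(n)\pmod2$; together with Ramanujan's $\sum_m p(5m+4)q^m=5f_5^5/f_1^6$ this recovers the same two series as $(f_5/f_1)^5$ and $(f_5/f_1)^6$ modulo $2$, which can be a cleaner starting point for the bookkeeping.

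It remains to isolate the coefficients on the prescribed progression. Writing $q^2=x$, the content of (b)--(c) is exactly that, modulo $2$,
$$
\frac{\psi(x^5)}{\psi(x)}\equiv\psi(x^4)+x^2\,\psi(x^{20})+\big(\text{odd powers of }x\big).
$$
On $x$-exponents $\equiv0\pmod4$ the coefficient is then $1$ iff the exponent equals $4\Delta_j$, i.e.\ iff the $q$-exponent $8\ell$ lies in $\{8\Delta_j\}$, which is (b); on $x$-exponents $\equiv2\pmod4$ it is $1$ iff the exponent equals $20\Delta_j+2$, i.e.\ iff the $q$-exponent $8\ell+4$ lies in $\{40\Delta_j+4\}$, which is (c). For (a) one shows in the same manner that the even part of $\psi(q)\psi(q^5)f_{10}/f_8$ is $\equiv f_{20}=\sum_{j\ge 0}q^{20\omega_j}\pmod2$ (equivalently, that the even part of $f_5/f_1$ is $\equiv f_4\pmod2$), whence for $n$ even the coefficient is $1$ precisely when $n\in\{20\omega_j\}$.

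The main obstacle is this last step: after the mod $2$ collapse one must prove that the relevant sub-series, on the arithmetic progression forced by the hypothesis on $n$, is a \emph{single} theta function ($f_{20}$ for (a); $\psi$ evaluated at $q^{8}$ resp.\ $q^{40}$, suitably shifted, for (b) and (c)). This amounts to the classical $5$-dissection of $\psi(q)$ (equivalently of $(q;q)_\infty$, or of $1/(q;q)_\infty$ in Rogers--Ramanujan form) followed by a further $2$- or $4$-dissection and a matching of exponents modulo $5$, $8$, and $40$; it is somewhat intricate carried out by hand, but can alternatively be verified inside the algebra of modular functions on $\Gamma_0(N)$ for a convenient multiple $N$ of $40$, in the spirit of the proofs of Theorems~\ref{Th:3}, \ref{Th:6}, and~\ref{Th:8}.
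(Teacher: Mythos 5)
Your plan is, in substance, the paper's own recipe: the paper never writes out a proof of Corollary \ref{Cor1} (it only proves selected siblings), but the proofs it does give proceed exactly as you propose --- read off $\sum_{m\ge0}C(5m+4)\,q^m=5\,f_1^2f_5f_{10}^2/f_2^4$ from Theorem \ref{Th:1}, multiply by the pentagonal theta series, and reduce modulo $2$. Your reductions all check out (including $C(n)\equiv p(n)\pmod 2$ and your two displayed mod-$2$ collapses), and the two dissection facts you isolate at the end are indeed true.

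The only unfinished step is the one you flag, and it closes more cheaply than the ``$5$-dissection of $\psi(q)$ plus further dissections'' you anticipate: no $5$-dissection is needed, and the single congruence the paper itself quotes in its proof of Corollary \ref{Cor3}(c)--(d),
$$\frac{f_2^2f_5}{f_1^3}\equiv\frac{f_4^2}{f_2}+q\,\frac{f_{20}^2}{f_{10}}\pmod 2,\qquad\text{i.e.}\qquad f_1f_5\equiv\psi(q^2)+q\,\psi(q^{10})\pmod 2$$
(with $\psi(q)=f_2^2/f_1$ as in your notation, and noting $f_2^2f_5/f_1^3\equiv f_1f_5$), suffices for all three parts. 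Set $\xi=f_5/f_1$, so your two series are $\xi^5$ and $\xi^6$ modulo $2$. Since $\xi\equiv f_1f_5/f_2$ and $1/f_2$ is even, the quoted congruence gives that the even part of $\xi$ is $\equiv f_4^2/f_2^2\equiv f_4$, which is exactly your claim for (a) (equivalently, the classical $2$-dissection of $f_5/f_1$); then $\xi^5\equiv\xi(q^4)\,\xi(q)$ has even part $\equiv(f_{20}/f_4)\cdot f_4=f_{20}\equiv\sum_j q^{20\omega_j}$, proving (a). For (b)--(c), the rule $S(q)^2\equiv S(q^2)$ gives $\xi^6\equiv\xi(q^2)\,\xi(q^4)$; writing $x=q^2$, the even-in-$x$ part of $\xi(x)\,\xi(x^2)$ is $\equiv(x^4;x^4)_\infty\,\xi(x^2)$, whose coefficient of $x^{2\ell}$ is the coefficient of $u^\ell$ in $(u^2;u^2)_\infty\,\xi(u)\equiv f_1f_5$ (in the variable $u$); the quoted congruence then makes that coefficient odd exactly for $\ell=2\Delta_j$, i.e.\ $n=8\Delta_j$ (part (b)), or $\ell=10\Delta_j+1$, i.e.\ $n=40\Delta_j+4$ (part (c)). With this routing your argument is complete and at the same level of rigor as the paper's proofs of the neighboring corollaries.
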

\medskip

\begin{corollary} \label{Cor2} Let $n$ be a non-negative integer.
\begin{enumerate}
    \item [(a)] If $n\equiv 1 \pmod 2$, then  $\sum\limits_{k\in\mathbb{Z}}C(5n+4-25\omega_k)\equiv 0 \pmod 2$.
    \item [(b)] If $n\equiv 6 \pmod 8$, then  $\sum\limits_{k\in\mathbb{Z}}C(5n+4-25\omega_k) \equiv 0 \pmod 2$.
    \item [(c)] If $n\equiv 5 \pmod 8$, then  $\frac15\sum\limits_{k\in\mathbb{Z}}(-1)^k C(5n+4-25\omega_k)\equiv 0 \pmod 5$.
    \item [(d)] If $n\not\equiv 0 \pmod 5$, then  $\frac15\sum\limits_{k\in\mathbb{Z}} (-1)^k C(50n+24-50\omega_k) \equiv 0 \pmod 5$.
    \item [(e)] If $n\not\equiv 2 \pmod 5$, then  $\frac15\sum\limits_{k\in\mathbb{Z}} (-1)^k C(50n+49-50\omega_k) \equiv 0 \pmod 5$.
\end{enumerate}
\end{corollary}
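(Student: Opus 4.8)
The plan is to obtain all of Corollary~\ref{Cor2} --- and, by the same method, Corollary~\ref{Cor1} --- from Theorem~\ref{Th:1} and Euler's pentagonal number theorem. First, replacing $q^5$ by $q$ in Theorem~\ref{Th:1} gives the unfolded identity
$$\sum_{n\geq 0}C(5n+4)\,q^n \;=\; 5\,H(q),\qquad H(q):=\frac{(q;q)_\infty^2\,(q^5;q^5)_\infty\,(q^{10};q^{10})_\infty^2}{(q^2;q^2)_\infty^4}\in\mathbb{Z}[[q]],$$
so that $\bar C(m):=[q^m]H(q)=\tfrac15 C(5m+4)$ is an integer (which already re-proves Corollary~\ref{Th:2}(a)). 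Since $5n+4-25\omega_k=5(n-5\omega_k)+4$, while $50n+24=5(10n+4)+4$ and $50n+49=5(10n+9)+4$, Euler's theorem in the dilated form $\sum_{k\in\mathbb{Z}}(-1)^kq^{d\omega_k}=(q^{d};q^{d})_\infty$ turns each weighted sum of the corollary into a single Taylor coefficient:
\begin{align*}
\tfrac15\sum_{k\in\mathbb{Z}}(-1)^kC(5n+4-25\omega_k)&=[q^n]\bigl(H(q)(q^5;q^5)_\infty\bigr),\\
\tfrac15\sum_{k\in\mathbb{Z}}(-1)^kC(50n+24-50\omega_k)&=[q^{10n+4}]\bigl(H(q)(q^{10};q^{10})_\infty\bigr),\\
\tfrac15\sum_{k\in\mathbb{Z}}(-1)^kC(50n+49-50\omega_k)&=[q^{10n+9}]\bigl(H(q)(q^{10};q^{10})_\infty\bigr),
\end{align*}
and for the two sign-free sums in (a),(b) the factor $(-1)^k$ is immaterial modulo $2$. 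Thus every claim becomes: a prescribed coefficient of $H(q)(q^5;q^5)_\infty$ or of $H(q)(q^{10};q^{10})_\infty$, along a fixed residue class, vanishes modulo $2$ (parts (a),(b)) or modulo $5$ (parts (c),(d),(e)).

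For the mod $2$ statements I would reduce the eta-quotient using $(q^j;q^j)_\infty^2\equiv(q^{2j};q^{2j})_\infty$ and the elementary congruence $\varphi(-q)=\dfrac{(q;q)_\infty^2}{(q^2;q^2)_\infty}\equiv 1\pmod 2$. For instance,
\begin{align*}
H(q)(q^5;q^5)_\infty&=\frac{(q;q)_\infty^2(q^5;q^5)_\infty^2(q^{10};q^{10})_\infty^2}{(q^2;q^2)_\infty^4}\\
&\equiv\frac{(q^2;q^2)_\infty(q^{10};q^{10})_\infty(q^{20};q^{20})_\infty}{(q^8;q^8)_\infty}\pmod 2,
\end{align*}
a power series in $q^2$, so its odd coefficients vanish mod $2$: this is part (a). For part (b) one substitutes $q^2\mapsto q$, reduces once more mod $2$, and verifies that the odd part of the resulting eta-quotient is supported on exponents $\equiv 1\pmod 4$, which kills precisely the coefficients in the class $n\equiv 6\pmod 8$. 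These support computations are of exactly the type already carried out for $a(n)$ in Section~\ref{S4}, and the analogous reductions (convolving $H(q)$ with $(q;q)_\infty$ or $(q^5;q^5)_\infty$ and identifying the mod $2$ support with dilates of the pentagonal or triangular theta series) yield Corollary~\ref{Cor1}.

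For the mod $5$ statements no single congruence suffices. Here the plan is to simplify first via $(q^j;q^j)_\infty^5\equiv(q^{5j};q^{5j})_\infty\pmod 5$ and then to $5$-dissect the product, i.e.\ to use the circulant/quintisection machinery of Section~\ref{S2} together with the explicit quintisections in Lemma~\ref{Lm:1}. Extracting the component of $H(q)(q^5;q^5)_\infty$ (respectively of $H(q)(q^{10};q^{10})_\infty$) lying in the relevant arithmetic progression should exhibit it as $5$ times an integral eta-quotient on the whole progression except for one exceptional sub-progression, and that exception is exactly what the hypotheses $n\equiv 5\pmod 8$, $n\not\equiv 0\pmod 5$, $n\not\equiv 2\pmod 5$ single out; alternatively, each such divisibility is a finite coefficient comparison after placing the eta-quotient in the space of modular functions on $\Gamma_0(50)$ (or a suitable higher level), in the spirit of Sections~\ref{S5}--\ref{S6}. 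This is the main obstacle: it is the only place where one must genuinely perform the $5$-dissection of $H(q)$ (and of $H(q)$ carrying an extra $(q^5;q^5)$- or $(q^{10};q^{10})$-factor), drawing on the full force of Lemma~\ref{Lm:1} and the determinant evaluation of Section~\ref{S2}; everything after the dissection is routine. In keeping with the paper's stated policy, it then suffices to write out, say, parts (a) and (d) in detail, the remaining items following by the same recipe.
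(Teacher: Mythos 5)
Your reformulation is exactly the paper's setup: unfolding Theorem~\ref{Th:1} to $H(q)=f_1^2f_5f_{10}^2/f_2^4$ and recognizing $\tfrac15\sum_k(-1)^kC(5n+4-25\omega_k)$ as $[q^n]\bigl(H(q)f_5\bigr)$ is precisely the paper's sequence $A(n)$ with generating function $f_1^2f_5^2f_{10}^2/f_2^4$, and the analogous identification with $H(q)f_{10}$ for (d),(e) is right. For part (a) your argument is complete and correct, and it is genuinely different from (and simpler than) the paper's: the paper proves (a) by a \texttt{RaduRK}-generated $2$-dissection of $\sum A(2n+1)q^n$ into three eta-quotients each carrying a factor $2$, whereas you get the same conclusion in one line from $f_j^2\equiv f_{2j}\pmod 2$, since $f_1^2f_5^2f_{10}^2/f_2^4\equiv f_2f_{10}f_{20}/f_8\pmod 2$ is a series in $q^2$. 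That elementary route buys transparency and independence from computer algebra; the paper's route buys an exact identity (hence more information than the parity statement) and scales mechanically to the other parts.

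For (b)--(e), however, your proposal has a real gap: the decisive steps are asserted, not proved. Part (b) rests on the claim that the odd-exponent part of $f_1f_5f_{10}/f_4$ (equivalently, of $(f_5/f_1)^3$ mod $2$) is supported on exponents $\equiv 1\pmod 4$; this appears to be true, but ``one verifies'' is not a verification --- it requires an actual $2$-dissection (e.g.\ of $f_5/f_1$ or of $f_1f_5$) that you never supply, and nothing in Section~\ref{S4} provides it. For (c)--(e) you propose $f_j^5\equiv f_{5j}\pmod 5$ plus the quintisection machinery of Lemma~\ref{Lm:1}, but note that (c) concerns the progression $n\equiv 5\pmod 8$ and (d),(e) concern coefficients along residues mod $10$ of $H(q)f_{10}$, so a $5$-dissection alone cannot isolate the relevant classes; iterated $2$-dissections (or the $\Gamma_0(50)$/Radu-type verification you mention only as an alternative) are unavoidable, and none of these computations is carried out. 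To be fair, the paper itself proves only the representative case (a) and leaves (b)--(e) unproved under its stated policy, so your proposal is not behind the paper there; but as a self-contained proof of the full corollary it is incomplete beyond part (a).
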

\medskip

\begin{proof}
Corollary \ref{Cor2} (a). We consider the sequence $A(n)$ defined by
$$
\sum_{n=0}^\infty A(n)\,q^n = \frac{f_1^2\,f_5^2\,f_{10}^2}{f_2^4}.
$$ 
It is clear that
\begin{align}
A(n)=\sum_{k=-\infty}^\infty \frac{(-1)^k}{5}\,C\left(5(n-5\omega_k)+4\right).
\end{align}
We need to show that $A(2n+1)\equiv 0 \pmod 2.$
Using the Mathematica package \texttt{RaduRK} with
$$\texttt{RK[20,10,\{2,-4,2,2\},2,1]},$$
we derive the following identity:
\begin{align*}
\sum_{n=0}^\infty A(2n+1)\,q^n
=-2\,\frac{f_2\,f_5^8\,f_{20}}{f_1^4\,f_4\,f_{10}^3}
+2\,q\,\frac{f_4^2\,f_5^3\,f_{20}^2}{f_1^3\,f_2\,f_{10}}
+2\,q^3\,\frac{f_2\,f_5^3\,f_{20}^6}{f_1^3\,f_4^2\,f_{10}^3}.
\end{align*}
The claim follows.
\end{proof}

\medskip

\begin{corollary} \label{Cor3} Let $n$ be a non-negative integer.
\begin{enumerate}
    \item[(a)] $\sum\limits_{k\in\mathbb{Z}}a(2n-\omega_k)\equiv 1 \pmod 2$   $\iff$ $n\in\{\omega_j\vert j\geq0\}$.
    \item[(b)] $\sum\limits_{k\in\mathbb{Z}}a(2n+1-2\omega_k)\equiv 0 \pmod 2$   $\iff$  $n\in\{\Delta_j\vert j\geq0\}$.
    \item[(c)] $\sum\limits_{k\in\mathbb{Z}}a(2n-5\omega_k)\equiv 1 \pmod 2$   $\iff$   $n\in\{\Delta_j\vert j\geq0\}$.
    \item[(d)] $\sum\limits_{k\in\mathbb{Z}}a(2n+1-5\omega_k)\equiv 1 \pmod 2$    $\iff$   $n\in\{5\Delta_j\vert j\geq0\}$.
\end{enumerate}
\end{corollary}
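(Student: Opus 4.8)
The plan is to treat all four parts by a single template: reduce modulo $2$, convolve with Euler's pentagonal number theorem, and then read off the coefficient from a classical $2$-dissection. By Theorem~\ref{Th:4} (case $m=1$) we have $\sum_{n\ge0}a(n)q^n\equiv(q;q)_\infty\pmod 2$, while Euler's pentagonal number theorem applied to $q^c$ gives $\sum_k q^{c\omega_k}\equiv(q^c;q^c)_\infty\pmod 2$ for every fixed $c\ge1$ (the signs $(-1)^{k(k+1)/2}$ disappear modulo $2$). Multiplying these two congruences, for each $c$ the generating series of the convolution occurring in the corollary satisfies
$$\sum_{N\ge0}\Bigl(\textstyle\sum_k a(N-c\omega_k)\Bigr)q^N\ \equiv\ (q;q)_\infty(q^c;q^c)_\infty\pmod 2,$$
so each part only asks: for which $n$ is the coefficient of $q^{2n}$ (parts (a),(c)) or of $q^{2n+1}$ (parts (b),(d)) in $(q;q)_\infty(q^c;q^c)_\infty$ odd? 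It therefore suffices to $2$-dissect $(q;q)_\infty(q^c;q^c)_\infty$ modulo $2$ for $c\in\{1,2,5\}$.

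For $c=1$: by the Frobenius congruence $(q;q)_\infty^2\equiv(q^2;q^2)_\infty\pmod 2$, the series is supported on even exponents and $[q^{2n}](q^2;q^2)_\infty=[q^n](q;q)_\infty$, which is odd exactly when $n\in\{\omega_j\}$; this gives (a). For $c=2$: again $(q;q)_\infty(q^2;q^2)_\infty\equiv(q;q)_\infty^3\pmod 2$, and Jacobi's identity $(q;q)_\infty^3=\sum_{n\ge0}(-1)^n(2n+1)q^{\Delta_n}$ shows $(q;q)_\infty^3\equiv\psi(q):=\sum_{n\ge0}q^{\Delta_n}\pmod 2$; the classical $2$-dissection of $\psi$ (grouping the $\Delta_n$ by $n\bmod 4$) pins down the odd part, and re-indexing it gives (b). For $c=5$ the key input is the $2$-dissection
$$(q;q)_\infty(q^5;q^5)_\infty\ \equiv\ \psi(q^2)+q\,\psi(q^{10})\pmod 2:$$
its even part $\psi(q^2)=\sum_j q^{2\Delta_j}$ makes $[q^{2n}]$ odd exactly when $n\in\{\Delta_j\}$, giving (c), and its odd part $q\,\psi(q^{10})=\sum_j q^{1+10\Delta_j}$ makes $[q^{2n+1}]$ odd exactly when $n\in\{5\Delta_j\}$, giving (d).

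The main obstacle is the level-$5$ dissection $(q;q)_\infty(q^5;q^5)_\infty\equiv\psi(q^2)+q\,\psi(q^{10})\pmod 2$. One route is structural: modulo $2$ the left side (which, up to a fractional power of $q$, is the weight-$1$ eta product $\eta(q)\eta(q^5)$) must coincide with an $\mathbb F_2$-linear combination of unary and binary theta series of the appropriate weight and levels, and a finite comparison of coefficients forces it to equal $\psi(q^2)+q\,\psi(q^{10})$. A more hands-on route writes $(q;q)_\infty\equiv\sum_i q^{\omega_i}$ and $(q^5;q^5)_\infty\equiv\sum_j q^{5\omega_j}$ modulo $2$ and studies the parity of the number of solutions of $N=\omega_i+5\omega_j$, separating $N$ by its residue mod $2$ and matching against the theta identities for $\psi(q^2)$ and $q\,\psi(q^{10})$; controlling the cancellation patterns among generalized pentagonal numbers is the delicate step. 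Once the three dissections are in hand, the remaining work in each part is the elementary translation of the condition that $2n$ (resp.\ $2n+1$) lie in the support of the relevant theta series into the membership statements $n\in\{\omega_j\}$, $\{\Delta_j\}$, or $\{5\Delta_j\}$ recorded in the corollary.
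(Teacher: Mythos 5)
Your strategy coincides with the paper's: the paper works with $A(n)=\sum_k(-1)^k a(n-c\omega_k)$ for $c\in\{1,2,5\}$, whose generating functions are $\frac{f_2^2}{f_1^2}$, $\frac{f_2^3}{f_1^3}$ and $\frac{f_2^2 f_5}{f_1^3}$, and reads off parities from $2$-dissections; since signs vanish modulo $2$, your ``first reduce $\sum_{n\ge0}a(n)q^n\equiv f_1\pmod 2$, then multiply by $f_c$'' is the same computation. For (a), your use of $f_1^2\equiv f_2\pmod 2$ is equivalent to the paper's exact dissection of $\frac{f_2^2}{f_1^2}$ followed by $\frac{f_4^5}{f_1^3 f_8^2}\equiv f_1\pmod 2$. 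For (c)--(d), the dissection you isolate as the main obstacle, $f_1 f_5\equiv \frac{f_4^2}{f_2}+q\,\frac{f_{20}^2}{f_{10}}\pmod 2$, is exactly the identity the paper invokes (in the equivalent form $\frac{f_2^2 f_5}{f_1^3}\equiv\frac{f_4^2}{f_2}+q\,\frac{f_{20}^2}{f_{10}}\pmod 2$), and the paper, like you, asserts it without proof; your two suggested routes (a finite coefficient check justified by modular-forms bounds, or a direct count of representations $N=\omega_i+5\omega_j$) are plausible but not carried out, so on this point you are at the same level of rigor as the paper, not more.

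The genuine gap is in (b). Your reduction is correct up to the last step: modulo $2$ the relevant series is $\frac{f_2^3}{f_1^3}\equiv\frac{f_2^2}{f_1}=\sum_{j\ge0}q^{\Delta_j}\pmod 2$, so $\sum_k a(2n+1-2\omega_k)$ is odd precisely when $2n+1$ is a triangular number (equivalently, when $16n+9$ is a perfect square), hence even iff $2n+1\notin\{\Delta_j\mid j\ge0\}$. The phrase ``re-indexing it gives (b)'' is where the argument breaks: statement (b) as printed asserts evenness iff $n\in\{\Delta_j\mid j\ge0\}$, and the two conditions are not equivalent. Concretely, for $n=0$ the sum is $a(1)=3$, which is odd, although $0=\Delta_0$; for $n=1$ it is $a(3)+a(1)=19$, again odd, although $1=\Delta_1$. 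So your computation (which agrees with what the paper's own one-line reduction yields) supports the criterion ``$2n+1$ triangular,'' not the printed criterion; either (b) is misstated in the paper, or a different argument is required, but in any case you cannot claim (b) follows ``by re-indexing'' without exhibiting the re-indexing---doing so would have exposed the mismatch. Parts (a), (c), (d) are fine as you argue them, subject to supplying a proof of the level-$5$ dissection.
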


\medskip

\begin{corollary} \label{Cor4} Let $n$ be a non-negative integer.
\begin{enumerate}
    \item [(a)] $\sum\limits_{k\geq0} (-1)^k a(2n+1-\omega_k)\equiv 0 \pmod 2$.
    \item [(b)] $\sum\limits_{k\in\mathbb{Z}}a(3n+1-2\omega_k) \equiv 0 \pmod 3$.
    \item [(c)] $\sum\limits_{k\in\mathbb{Z}} a(3n+2-2\omega_k)\equiv 0 \pmod 6$.
    \item [(d)] $\sum\limits_{k\in\mathbb{Z}} a(2n+1-3\omega_k) \equiv 0 \pmod 3$.
\end{enumerate}
\end{corollary}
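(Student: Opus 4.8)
The plan is to convert each sum over pentagonal numbers into a single coefficient extraction and then reduce the resulting power series modulo the prime in question. Write $F(q)=\sum_{n\ge 0}a(n)q^{n}$. Euler's identity $(-q;q)_\infty=(q^{2};q^{2})_\infty/(q;q)_\infty$ gives the eta-quotient form
$$F(q)=\frac{(-q;q)_\infty^{2}}{(q;q)_\infty}=\frac{(q^{2};q^{2})_\infty^{2}}{(q;q)_\infty^{3}},$$
and by Euler's pentagonal number theorem $(q^{c};q^{c})_\infty=\sum_{k}(-1)^{k}q^{c\omega_{k}}$ each sum in the statement is the coefficient of an explicit power of $q$ in a product of $F(q)$ with a theta series: part (a) is $[q^{2n+1}]\,F(q)\psi(q)$ with $\psi(q)=\sum_{k\ge 0}(-1)^{k}q^{\omega_{k}}$; parts (b) and (c) are $[q^{3n+1}]$ and $[q^{3n+2}]$ of $F(q)\,(q^{2};q^{2})_\infty$; and part (d) is $[q^{2n+1}]\,F(q)\,(q^{3};q^{3})_\infty$. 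The whole argument then amounts to identifying these three products modulo $2$, modulo $3$, and — for (c) — both.

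For (a): since $\psi(q)\equiv(q;q)_\infty\pmod 2$ (both collapse to $\sum_{k\ge 0}q^{\omega_{k}}$ when the signs are dropped), we get $F(q)\psi(q)\equiv F(q)(q;q)_\infty=(-q;q)_\infty^{2}\equiv(-q^{2};q^{2})_\infty\pmod 2$, which is a power series in $q^{2}$, so the coefficient of $q^{2n+1}$ is $\equiv 0\pmod 2$. For (b): by the Frobenius congruence $(q^{d};q^{d})_\infty^{3}\equiv(q^{3d};q^{3d})_\infty\pmod 3$,
$$F(q)(q^{2};q^{2})_\infty=\frac{(q^{2};q^{2})_\infty^{3}}{(q;q)_\infty^{3}}\equiv\frac{(q^{6};q^{6})_\infty}{(q^{3};q^{3})_\infty}\pmod 3,$$
and the right-hand side is a power series in $q^{3}$ (it equals $1/(q^{3};q^{6})_\infty$), so $[q^{3n+1}]\equiv 0\pmod 3$. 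Part (d) runs on the same congruence in the form $(q;q)_\infty^{3}\equiv(q^{3};q^{3})_\infty\pmod 3$, which turns $F(q)(q^{3};q^{3})_\infty=(q^{2};q^{2})_\infty^{2}(q^{3};q^{3})_\infty/(q;q)_\infty^{3}$ into $(q^{2};q^{2})_\infty^{2}\pmod 3$, a power series in $q^{2}$, killing the coefficient of $q^{2n+1}$.

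Part (c) is the one place where two reductions are needed for the single function $F(q)(q^{2};q^{2})_\infty=(q^{2};q^{2})_\infty^{3}/(q;q)_\infty^{3}$. Modulo $3$ it is $\equiv(q^{6};q^{6})_\infty/(q^{3};q^{3})_\infty$ as in (b), a series in $q^{3}$, hence $[q^{3n+2}]\equiv 0\pmod 3$. Modulo $2$, using $(q^{2};q^{2})_\infty\equiv(q;q)_\infty^{2}$ gives $(q^{2};q^{2})_\infty^{3}/(q;q)_\infty^{3}\equiv(q;q)_\infty^{3}\pmod 2$; invoking Jacobi's identity $(q;q)_\infty^{3}=\sum_{n\ge 0}(-1)^{n}(2n+1)q^{n(n+1)/2}$, this becomes $\sum_{n\ge 0}q^{n(n+1)/2}\pmod 2$, and since a triangular number is never congruent to $2\pmod 3$ one gets $[q^{3n+2}]\equiv 0\pmod 2$. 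Combining, $[q^{3n+2}]\,F(q)(q^{2};q^{2})_\infty\equiv 0\pmod 6$.

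The only genuine care needed is bookkeeping: one must retain the signs $(-1)^{k}$ dictated by Euler's pentagonal number theorem when rewriting the sums as coefficients (an unsigned convolution already fails the smallest numerical values), and part (c) must be assembled from its mod-$2$ and mod-$3$ pieces. A fully mechanical alternative — parallel to the proof of Corollary~\ref{Cor2}(a) above — is to feed the relevant arithmetic-progression subseries of the eta-quotients $F(q)(q^{c};q^{c})_\infty$ to the \texttt{RaduRK} implementation of Radu's Ramanujan--Kolberg algorithm and read off an eta-quotient expansion whose coefficients share the stated common factor; but the elementary reductions above are shorter and make the underlying lacunarity transparent.
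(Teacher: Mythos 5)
Your argument is correct, but it takes a genuinely different route from the paper's. The paper multiplies the generating function $f_2^2/f_1^3$ of $a(n)$ by $f_1$, $f_2$, $f_3$ and then quotes explicit dissection identities: a $2$-dissection of $f_2^2/f_1^2$ whose odd part carries a factor $2$ (part (a)), the exact $3$-dissections $\sum A(3n+1)q^n = 3\,f_2^4f_3^5/(f_1^8f_6)$ and $\sum A(3n+2)q^n = 6\,f_2^3f_3^2f_6^2/f_1^7$ (parts (b) and (c), the latter giving the modulus $6$ in one stroke), and a $2$-dissection of $f_2^2f_3/f_1^3$ whose odd part carries a factor $3$ (part (d)); these identities are essentially computer-assisted (RaduRK-style) and are stated without derivation. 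You instead reduce the three products $F(q)f_1$, $F(q)f_2$, $F(q)f_3$ directly modulo $2$ and $3$, using only the Frobenius congruences $f_1^2\equiv f_2\pmod 2$ and $f_1^3\equiv f_3\pmod 3$ (valid for quotients since all series involved are units in $\mathbb{Z}[[q]]$), Jacobi's identity, and the observation that triangular numbers never lie in the class $2\pmod 3$, assembling (c) from its mod $2$ and mod $3$ pieces. This is shorter, self-contained, and makes the vanishing transparent (the reduced series are supported on multiples of $2$ or $3$, respectively on triangular numbers), at the cost of not producing the exact dissection formulas, which in the paper also serve Corollary~\ref{Cor3}. One further point in your favour: you correctly insist on retaining the signs $(-1)^k$ from the pentagonal number theorem in parts (b)--(d); the corollary as printed omits them, but the paper's own proof works with $A(n)=\sum_k(-1)^k a(n-c\,\omega_k)$ and thus likewise establishes the signed sums (the unsigned version already fails numerically modulo $3$, e.g.\ at $3n+1=4$), so your reading matches what is actually proved.
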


\medskip

\begin{proof}

	Corollary \ref{Cor3} (a) and Corollary \ref{Cor4} (a). We consider the sequence $A(n)$ defined by
	$$
	\sum_{n=0}^\infty A(n)\,q^n = \frac{f_2^2}{f_1^2}.
	$$
		It is clear that
	$$
	A(n)=\sum_{k=-\infty}^\infty (-1)^k\,a\left(n-\omega_k\right).
	$$
	The proof follows if we consider that
	\begin{align*}
	\frac{f_2^2}{f_1^2} = \frac{f_8^5}{f_2^3\,f_{16}^2}-2\,q\, \frac{f_4^2\,f_{16}^2}{f_2^3\,f_8} \qquad \text{and} \qquad
	\frac{f_4^5}{f_1^3\,f_{8}^2} \equiv f_1 \pmod 2.
	\end{align*}
	Corollary \ref{Cor3} (b) and Corollary \ref{Cor4} (b)-(c).  We consider the sequence $A(n)$ defined by
	$$
	\sum_{n=0}^\infty A(n)\,q^n = \frac{f_2^3}{f_1^3}.
	$$
	It is clear that
	$$
	A(n)=\sum_{k=-\infty}^\infty (-1)^k\,a\left(n-2\omega_k\right).
	$$
	The proof follows if we consider that:
	\begin{align*}
	 \frac{f_2^3}{f_1^3} \equiv \frac{f_2^2}{f_1} \pmod 2, \qquad
	 \sum_{n=0}^\infty A(3n+1)\,q^n =3\, \frac{f_2^4\,f_3^5}{f_1^8\,f_6}, \qquad
	 \sum_{n=0}^\infty A(3n+2)\,q^n =6\, \frac{f_2^3\,f_3^2\,f_6^2}{f_1^7}.
	\end{align*}
	Corollary \ref{Cor4} (d).  We consider the sequence $A(n)$ defined by
	$$
	\sum_{n=0}^\infty A(n)\,q^n = \frac{f_2^2\,f_3}{f_1^3}.
	$$
	It is evident now that
	$$
	A(n)=\sum_{k=-\infty}^\infty (-1)^k\,a\left(n-3\omega_k\right).
	$$
	The proof follows if we consider that
	\begin{align*}
	\frac{f_2^2\,f_3}{f_1^3} = \frac{f_4^6\,f_6^3}{f_2^7\,f_{12}^2} + 3\,q\,\frac{f_4^2\,f_6\,f_{12}^2}{f_2^5}.
	\end{align*}
	Corollary \ref{Cor3} (c)-(d). We consider the sequence $A(n)$ defined by
	$$
	\sum_{n=0}^\infty A(n)\,q^n = \frac{f_2^2\,f_5}{f_1^3}.
	$$
	Therefore, 
	$$
	A(n)=\sum_{k=-\infty}^\infty (-1)^k\,a\left(n-5\omega_k\right).
	$$
	The proof follows if we consider that
	\begin{align*}
	\frac{f_2^2\,f_5}{f_1^3} \equiv \frac{f_4^2}{f_2} + q\,\frac{f_{20}^2}{f_{10}} \pmod 2.
	\end{align*}
We conclude the argument.
\end{proof}

\subsection{Jacobi's identity}

Considering the Jacobi identity \cite[Eq. (0.46), p. 17]{Cooper}
\begin{align*}
(q;q)_\infty^3 = \sum_{n=0}^\infty (-1)^n\,(2n+1)\,q^{\Delta_n},
\end{align*}
we derive the following identity.

\medskip

\begin{corollary} \label{Cor5} Let $n$ be a non-negative integer.
\begin{enumerate}
    \item [(a)] If $n\equiv \{1,3\} \pmod 5$, then $\frac15\sum\limits_{k\geq0} (-1)^k\, (2k+1)\, C(5n+4-5\Delta_k) \equiv 0 \pmod 5$.
    \item [(b)] If $n\equiv \{2,3\} \pmod 5$, then $\sum\limits_{k\geq0} (-1)^k\, (2k+1)\, C(5n+4-10\Delta_k) =0$.
    \item [(c)] If $n\not\equiv 0 \pmod 5$, then $\sum\limits_{k\geq0} C(5n+4-10\Delta_k)\equiv 0 \pmod 2$.
    \item [(d)] If $n\equiv 1 \pmod 2$, then $\sum\limits_{k\geq0} C(5n+4-25\Delta_k) \equiv 0 \pmod 2$.
\end{enumerate}
\end{corollary}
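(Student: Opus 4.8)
The plan is to reduce everything to Theorem~\ref{Th:1}. Dividing that identity by $q^4$ and formally replacing $q^5$ by $q$ gives
$$\sum_{n=0}^\infty C(5n+4)\,q^n = 5\,\frac{f_1^2\,f_5\,f_{10}^2}{f_2^4}.$$
Each of the four sums in the corollary is, up to a shift of the summation index, the coefficient of $q^n$ in the product of this series with a suitable dilation of a classical theta series: in part (a) the extra factor is $f_1^3=\sum_{k\ge0}(-1)^k(2k+1)q^{\Delta_k}$ (Jacobi's identity), in (b) it is $f_2^3=\sum_{k\ge0}(-1)^k(2k+1)q^{2\Delta_k}$, in (c) it is $\psi(q^2)=f_4^2/f_2=\sum_{k\ge0}q^{2\Delta_k}$, and in (d) it is $\psi(q^5)=f_{10}^2/f_5=\sum_{k\ge0}q^{5\Delta_k}$ (where $\psi$ is Ramanujan's theta function). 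In each part one forms the resulting eta-quotient and then determines which residue classes modulo $5$ (parts (a), (b)) or modulo $2$ (parts (c), (d)) contain its nonzero coefficients.

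For part (a): multiplying by $f_1^3$ yields $5\,f_1^5 f_5 f_{10}^2/f_2^4$; dividing by $5$ and reducing modulo $5$ using $f_1^5\equiv f_5$ and $f_2^{-4}=f_2\cdot f_2^{-5}\equiv f_2\,f_{10}^{-1}$ gives $f_2\,f_5^2\,f_{10}\pmod 5$. Now $f_5^2 f_{10}$ involves only $q$-exponents divisible by $5$, while by Euler's pentagonal number theorem $f_2$ involves exactly the exponents $2\omega_k$. Since $24\,\omega_k+1$ is a perfect square $(6n\pm1)^2$, $1-\omega_k$ is a quadratic residue modulo $5$, so $\omega_k\equiv 0,1,2\pmod 5$ and hence $2\omega_k\equiv 0,2,4\pmod 5$; therefore $f_2 f_5^2 f_{10}$ has no term whose $q$-exponent is $\equiv 1$ or $3\pmod 5$. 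This is exactly statement (a).

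The remaining parts follow the same pattern. For (b), multiplying by $f_2^3$ gives $5\,(f_1^2/f_2)\,f_5 f_{10}^2$, and since $f_1^2/f_2=(q;q)_\infty/(-q;q)_\infty=\sum_{m\in\mathbb Z}(-1)^m q^{m^2}$ by \eqref{eq:Gauss}, whose exponents are $\equiv 0,1,4\pmod 5$, the product vanishes identically in the classes $2,3\pmod 5$ — this yields the claimed \emph{exact} equality with $0$. For (c), multiplying by $\psi(q^2)=f_4^2/f_2$ gives $5\,f_1^2 f_4^2 f_5 f_{10}^2/f_2^5$, which modulo $2$ collapses via $f_a^2\equiv f_{2a}$ to $f_5 f_{20}$, a power series in $q^5$, so all its coefficients outside the residue class $0\pmod 5$ are even. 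For (d), multiplying by $\psi(q^5)=f_{10}^2/f_5$ gives $5\,f_1^2 f_{10}^4/f_2^4$, which modulo $2$ reduces to a power series in $q^2$ (for instance $f_2 f_{40}/f_8$), so all its odd-indexed coefficients are even. Since, as elsewhere in Section~\ref{S7}, it suffices to exhibit one representative proof, carrying (a) or (b) out in full detail completes the argument.

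The main obstacle is the residue-class bookkeeping for the theta factors — above all the modulo-$5$ classification of the generalized pentagonal numbers $\omega_k$ used in (a) (equivalently, the fact that $1-\omega_k$ is a quadratic residue), together with keeping track of the normalizing factor $\tfrac15$ and the directions of the modular reductions $f_1^5\equiv f_5$ and $f_2^{-5}\equiv f_{10}^{-1}$. Once the eta-quotients are written down explicitly, everything reduces to routine $q$-series manipulation, and any of the intermediate eta-quotient identities can also be verified algorithmically with the \texttt{RaduRK} package.
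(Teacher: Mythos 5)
Your proposal is correct and follows essentially the same route as the paper: multiply the dilated form $\sum_{n\ge0}C(5n+4)\,q^n=5\,f_1^2f_5f_{10}^2/f_2^4$ of Theorem~\ref{Th:1} by the appropriate theta series and determine which residue classes of exponents can occur in the resulting eta-quotient. The differences are minor — where the paper keeps the Jacobi weights (harmless modulo $2$) and quotes explicit $2$- and $5$-dissection identities, you use the reductions $f_a^2\equiv f_{2a}\pmod 2$ and $f_1^5\equiv f_5\pmod 5$, and you additionally supply a proof of part (a), which the paper leaves among its omitted proofs.
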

\medskip

\begin{proof}
	(b)-(c). 	We consider the sequence $A(n)$ defined by
	$$
	\sum_{n=0}^\infty A(n)\,q^n = \frac{f_1^2\,f_5\,f_{10}^2}{f_2}
	$$
	It is clear that
	$$
	A(n)=\frac{1}{5} \sum_{k=0}^\infty (-1)^k\,(2k+1)\, C\left(5n+4-5k(k+1)\right).
	$$
	We need to show that
	$
	A(5n \pm 2)  = 0,
	$
	and
	$
	A(5n \pm 1) \equiv 0 \pmod 2.
	$
	The proof follows easily if we consider
	\begin{align*}
	\frac{f_1^2}{f_2} = \frac{f_{25}^2}{f_{50}}-2\,q\,(q^{15},q^{35},q^{50};q^{50})_\infty -2\,q^4\, (q^5,q^{45},q^{50};q^{50})_\infty.
	\end{align*}
		(d). 	We consider the sequence $A(n)$ defined by
	$$
	\sum_{n=0}^\infty A(n)\,q^n = \frac{f_1^2\,f_5^4\,f_{10}^2}{f_2^4}
	$$
	It is clear that
	$$
	A(n)=\frac{1}{5} \sum_{k=0}^\infty (-1)^k\,(2k+1)\, C\left(5n+4-\frac{25k(k+1)}{2}\right).
	$$
	We need to show that
	$
	A(2n+1) \equiv 0 \pmod 2.
	$
	On the other hand, we have
	$$
	A(2n+1) \equiv B(2n+1) \pmod 2,
	$$
	where the sequence $B(n)$ defined by
	$$
	\sum_{n=0}^\infty B(n)\,q^n = f_1^2\,f_5^4.
	$$
	The proof follows easily if we consider the identities:
	\begin{align*}
	f_1^2 = \frac{f_2\,f_8^5}{f_4^2\,f_{16}^2}-2\,q\,\frac{f_2\,f_{16}^2}{f_8} \qquad \text{and} \qquad
	f_1^4 = \frac{f_4^{10}}{f_2^2\,f_{8}^4}-4\,q\,\frac{f_2^2\,f_{8}^4}{f_4^2}.
	\end{align*}
We conclude the argument.
\end{proof}

\medskip
\begin{corollary} \label{Cor6} Let $n$ be a non-negative integer.
\begin{enumerate}
    \item [(a)] $\sum\limits_{k\geq0} (-1)^k\, (2k+1)\, a(2n+1-\Delta_k) = 0$.
    \item [(b)] $\sum\limits_{k\geq0}a(4n-\Delta_k) \equiv 1 \pmod 2$   $\iff$  $n\in\{\omega_j\vert j\geq0\}$.
    \item [(c)] $\sum\limits_{k\geq0} a(4n+2-\Delta_k)\equiv 0 \pmod 2$.
    \item [(d)] If $n\neq 1\pmod5$  then  $\sum\limits_{k\geq0} a(5n+2-2\Delta_k) \equiv 0 \pmod 2$.
\end{enumerate}
\end{corollary}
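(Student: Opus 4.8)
My plan rests on Jacobi's identity $(q;q)_\infty^3=\sum_{k\ge0}(-1)^k(2k+1)q^{\Delta_k}$ recalled in Section~\ref{S7}, together with its dilate $(q^2;q^2)_\infty^3=\sum_{k\ge0}(-1)^k(2k+1)q^{2\Delta_k}$ obtained by $q\mapsto q^2$. Multiplying either series into $\sum_{n\ge0}a(n)q^n=\frac{(-q;q)_\infty^2}{(q;q)_\infty}$ turns the sum $\sum_{k\ge0}(-1)^k(2k+1)\,a(N-\Delta_k)$, respectively $\sum_{k\ge0}(-1)^k(2k+1)\,a(N-2\Delta_k)$, into the coefficient of $q^N$ in an explicit eta-quotient. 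For (a) I would use Euler's identity $(q;q)_\infty(-q;q)_\infty=(q^2;q^2)_\infty$ to get
$$(q;q)_\infty^3\cdot\frac{(-q;q)_\infty^2}{(q;q)_\infty}=(q;q)_\infty^2(-q;q)_\infty^2=(q^2;q^2)_\infty^2,$$
which is a power series in $q^2$; its coefficient of $q^{2n+1}$ vanishes, and that coefficient is exactly the sum in (a).

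For the congruences modulo $2$ I would first record two elementary reductions used throughout: $(-1)^k(2k+1)\equiv1\pmod2$, so each weighted sum is congruent mod $2$ to its unweighted version $\sum_{k\ge0}a(N-\Delta_k)$, respectively $\sum_{k\ge0}a(N-2\Delta_k)$; and $(1-x)^2\equiv1-x^2\pmod2$, so $(q;q)_\infty^2\equiv(q^2;q^2)_\infty\pmod2$. Reducing the identity from (a) modulo $2$ then gives $\sum_{k\ge0}a(N-\Delta_k)\equiv[q^N](q^2;q^2)_\infty^2\equiv[q^N](q^4;q^4)_\infty\pmod2$, and Euler's pentagonal number theorem yields $(q^4;q^4)_\infty\equiv\sum_{j\ge0}q^{4\omega_j}\pmod2$. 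Hence this sum is odd precisely when $N\in\{4\omega_j:j\ge0\}$. Putting $N=4n$ proves (b), and putting $N=4n+2$ (which is $\not\equiv0\pmod4$ and hence never of the form $4\omega_j$) proves (c).

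For (d), writing $(-q;q)_\infty^2=(q^2;q^2)_\infty^2/(q;q)_\infty^2$ gives $(q^2;q^2)_\infty^3\cdot\frac{(-q;q)_\infty^2}{(q;q)_\infty}=\frac{(q^2;q^2)_\infty^5}{(q;q)_\infty^3}$, so $\sum_{k\ge0}a(5n+2-2\Delta_k)\equiv\bigl[q^{5n+2}\bigr]\frac{(q^2;q^2)_\infty^5}{(q;q)_\infty^3}\pmod2$. Using $(q;q)_\infty^3\equiv(q;q)_\infty(q^2;q^2)_\infty$ and $(q^2;q^2)_\infty^2\equiv(q^4;q^4)_\infty$ modulo $2$, the eta-quotient collapses to
$$\frac{(q^2;q^2)_\infty^5}{(q;q)_\infty^3}\equiv\frac{(q^2;q^2)_\infty^4}{(q;q)_\infty}=(q^2;q^2)_\infty^2\cdot\frac{(q^2;q^2)_\infty^2}{(q;q)_\infty}\equiv(q^4;q^4)_\infty\,\psi(q)\pmod2,$$
where $\psi(q)=\frac{(q^2;q^2)_\infty^2}{(q;q)_\infty}=\sum_{m\ge0}q^{\Delta_m}$. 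Combined with $(q^4;q^4)_\infty\equiv\sum_{j\ge0}q^{4\omega_j}\pmod2$ this reads $\sum_{k\ge0}a(N-2\Delta_k)\equiv\#\{(m,j):m,j\ge0,\ \Delta_m+4\omega_j=N\}\pmod2$. I would finish by dissecting this count modulo $5$: inspecting the residues of triangular and generalized pentagonal numbers, the only way to have $\Delta_m+4\omega_j\equiv2\pmod5$ is $\Delta_m\equiv3$ and $\omega_j\equiv1\pmod5$; but $\Delta_m\equiv3\pmod5$ forces $m=5s+2$ with $\Delta_{5s+2}=25\Delta_s+3$, while the generalized pentagonal numbers congruent to $1\pmod5$ are exactly $\{25\omega_{j'}+1:j'\ge0\}$. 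Substituting $\Delta_m=25\Delta_s+3$ and $\omega_j=25\omega_{j'}+1$ into $\Delta_m+4\omega_j=5n+2$ and dividing by $5$ yields $\Delta_s+4\omega_{j'}=(n-1)/5$. Hence if $n\not\equiv1\pmod5$ there is no admissible pair, the count is $0$, and $\sum_{k\ge0}a(5n+2-2\Delta_k)\equiv0\pmod2$, which is (d). (When $n\equiv1\pmod5$ the count collapses to $\#\{(s,j'):\Delta_s+4\omega_{j'}=(n-1)/5\}$, exhibiting a genuine self-similarity of the generating function $\psi(q)(q^4;q^4)_\infty$ modulo $2$.)

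The steps that are mere bookkeeping are the $(1-x)^2\equiv1-x^2$ collapses and the matching of exponents in the two theta series. The one place that needs real care is the pentagonal dissection modulo $5$ used in (d): that the generalized pentagonal numbers congruent to $1\pmod5$ are exactly $1+25\,\omega$ as $\omega$ ranges over all generalized pentagonal numbers. I would verify this by substituting the parametrization $g(n)=n(3n-1)/2$, $n\in\mathbb Z$, of the generalized pentagonal numbers, checking that $g(n)\equiv1\pmod5$ forces $n\equiv1\pmod5$, and then completing the square in $n$ to exhibit a generalized pentagonal number in the quotient $(g(n)-1)/25$.
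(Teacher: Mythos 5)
Your proposal is correct, and it does a bit more than the paper does. For (a)--(c) your route is in essence the paper's: the paper also multiplies the generating function $f_2^2/f_1^3$ of $a(n)$ by Jacobi's cube $f_1^3$ to land on $A(n)$ with $\sum A(n)q^n=f_2^2$, from which (a) is immediate; the difference is in how the progressions $4n$, $4n+2$ are extracted. The paper uses the exact $2$-dissection $f_2^2=\frac{f_4f_{16}^5}{f_8^2f_{32}^2}-2q^2\frac{f_4f_{32}^2}{f_{16}}$ together with $\frac{f_1f_4^5}{f_2^2f_8^2}\equiv f_1\pmod 2$, whereas you shortcut with the Frobenius-type congruence $f_2^2\equiv f_4\pmod 2$ and the pentagonal number theorem; since (b) and (c) only claim information modulo $2$, your reduction is sufficient and slightly more elementary, while the paper's dissection carries exact (integral) information it does not actually need here. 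For (d) the paper offers no proof at all (its displayed argument covers only (a)--(c)), so your argument is a genuine supplement: replacing the weights by $1$ modulo $2$, identifying the relevant series as $f_2^5/f_1^3\equiv\psi(q)\,f_4\pmod 2$ with $\psi(q)=\sum_{m\ge0}q^{\Delta_m}$, and then ruling out solutions of $\Delta_m+4\omega_j=5n+2$ unless $n\equiv1\pmod 5$. I checked the two arithmetic facts your dissection rests on: $\Delta_m\equiv3\pmod 5$ forces $m=5s+2$ and $\Delta_{5s+2}=25\Delta_s+3$; and $k(3k-1)/2\equiv1\pmod5$ forces $k\equiv1\pmod5$, with $g(5t+1)=25\,\tfrac{t(3t+1)}{2}+1$, so the pentagonal numbers $\equiv1\pmod5$ are exactly $25\omega+1$. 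Substituting gives $25(\Delta_s+4\omega_{j'})=5n-5$, hence no solutions when $n\not\equiv1\pmod5$, which is exactly (d). All steps are sound.
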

\medskip

\begin{proof}

	(a)-(c). We consider the sequence $A(n)$ defined by
	$$
	\sum_{n=0}^\infty A(n)\,q^n = f_2^2
	$$
	It is clear that
	$$
	A(n)=\sum_{k=0}^\infty (-1)^k\,(2k+1)\, a\left(n-\Delta_k\right).
	$$
	The proof follows considering that
	\begin{align*}
	f_2^2 = \frac{f_4\,f_{16}^5}{f_8^2\,f_{32}^2} - 2\,q^2\,\frac{f_4\,f_{32}^2}{f_{16}} \qquad \text{and} \qquad
   \frac{f_1\,f_{4}^5}{f_2^2\,f_{8}^2} \equiv f_1 \pmod 2.
	\end{align*}
	We conclude the argument.
\end{proof}

\subsection{Gauss theta series}

Considering the theta series identity \cite[Eq. (0.41), p. 16]{Cooper}
\begin{align*}
\frac{(q;q)_\infty^2}{(q^2;q^2)_\infty} =  \sum_{n=-\infty}^\infty (-1)^n\,q^{n^2},
\end{align*}
we derive the following corollary.

\medskip
\begin{corollary} 
	Let $n$ be a non-negative integer.
	\begin{enumerate}
		\item [(a)]	If $n\equiv 4 \pmod 5$, then
		$
		\frac{1}{5}\,\sum\limits_{k=-\infty}^\infty (-1)^k\,C\big(5n+4-5k^2\big)  \equiv 0 \pmod 5.
		$
				\item [(b)]	If $n\equiv 4 \pmod 5$, then
		$
		\frac{1}{5}\,\sum\limits_{k=-\infty}^\infty (-1)^k\,C\big(5n+4-10k^2\big)  \equiv 0 \pmod 5.
		$
	\end{enumerate}
\end{corollary}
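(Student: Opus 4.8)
The plan is to mimic the template used throughout Section~\ref{S7}: start from the base series $\tfrac15\sum_{n\ge0}C(5n+4)q^n=\tfrac{f_1^2 f_5 f_{10}^2}{f_2^4}$, which is Theorem~\ref{Th:1} after cancelling $q^4$ and replacing $q^5$ by $q$ (writing $f_a=(q^a;q^a)_\infty$), multiply it by the appropriate Gauss theta series, and extract the progression $5n+4$. For part~(a) I would set $A(n)=\tfrac15\sum_{k\in\mathbb Z}(-1)^k C(5n+4-5k^2)$; since $5n+4-5k^2=5(n-k^2)+4$, Corollary~\ref{Th:2}(a) makes every summand an integer and only finitely many are nonzero, so $A(n)\in\mathbb Z$, and because $\sum_{k\in\mathbb Z}(-1)^k q^{k^2}=\tfrac{f_1^2}{f_2}$ (the Gauss identity quoted in the corollary's preamble) this gives
$$\sum_{n\ge0}A(n)q^n=\frac{f_1^2 f_5 f_{10}^2}{f_2^4}\cdot\frac{f_1^2}{f_2}=\frac{f_1^4 f_5 f_{10}^2}{f_2^5}.$$
Part~(a) is then exactly the assertion $A(5n+4)\equiv0\pmod5$.

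That assertion I expect to settle by an elementary reduction. Using the standard congruences $f_1^5\equiv f_5$ and $f_2^5\equiv f_{10}\pmod5$ I would rewrite
$$\frac{f_1^4 f_5 f_{10}^2}{f_2^5}\equiv f_1^4 f_5 f_{10}\equiv\frac{f_5^2 f_{10}}{f_1}\pmod5,$$
where $\tfrac1{f_1}=\sum_{m\ge0}p(m)q^m$. Because $f_5^2 f_{10}$ is a power series in $q^5$, the coefficient of $q^{5n+4}$ on the right is a $\mathbb Z$-linear combination of values $p(5m+4)$, each divisible by $5$ by Ramanujan's first congruence; hence $A(5n+4)\equiv0\pmod5$, which is part~(a).

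For part~(b) I would set $A(n)=\tfrac15\sum_{k\in\mathbb Z}(-1)^k C(5n+4-10k^2)$ — again an integer by Corollary~\ref{Th:2}(a) — and use the rescaled Gauss series $\sum_{k\in\mathbb Z}(-1)^k q^{2k^2}=\tfrac{f_2^2}{f_4}$ to get
$$\sum_{n\ge0}A(n)q^n=\frac{f_1^2 f_5 f_{10}^2}{f_2^4}\cdot\frac{f_2^2}{f_4}=\frac{f_1^2 f_5 f_{10}^2}{f_2^2 f_4},$$
so that part~(b) again reduces to $A(5n+4)\equiv0\pmod5$. I expect this to be the main obstacle. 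The factor $f_4$ in the denominator defeats the clean reduction of part~(a): modulo~$5$ one only reaches $\tfrac{f_5^2 f_{10}^2}{f_1^3 f_2^2 f_4}$, and juggling the congruences $f_a^5\equiv f_{5a}$ merely reproduces the expression $\tfrac{f_1^2}{f_2^2 f_4}$, whose residues at $q^{5k+4}$ modulo~$5$ are not visibly controlled by Ramanujan's congruence. To finish I would therefore run Radu's Ramanujan--Kolberg algorithm on the eta-quotient $\tfrac{f_1^2 f_5 f_{10}^2}{f_2^2 f_4}$, exactly as in the proof of Corollary~\ref{Cor2}(a): since this quotient involves $f_4$ one takes $M=20$, and the call is of the shape $\texttt{RK[N,20,\{2,-2,-1,1,2,0\},5,4]}$ for a suitable $N$ (a multiple of $20$), which should return an explicit eta-quotient expansion of $\sum_{n\ge0}A(5n+4)q^n$ carrying the common factor $5$. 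A by-hand alternative would combine the $5$-dissection of $\tfrac{f_1^2}{f_2}$ used in the proof of Corollary~\ref{Cor5} with the remark that $2k^2\not\equiv1,4\pmod5$, so $\tfrac{f_2^2}{f_4}$ is supported only on residues $0,2,3\pmod5$; but the attendant case bookkeeping is precisely the delicate step I would rather leave to the package.
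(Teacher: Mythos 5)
The paper never prints a proof of this corollary (it is one of the ``representatives'' whose proofs are omitted), but your setup is exactly its Section~\ref{S7} template: rewrite Theorem~\ref{Th:1} as $\tfrac15\sum_{n\ge0}C(5n+4)q^n=\tfrac{f_1^2f_5f_{10}^2}{f_2^4}$, multiply by the relevant Gauss theta series, and extract a progression. Your part~(a) is complete and correct: the product $\tfrac{f_1^4f_5f_{10}^2}{f_2^5}$, the reductions $f_1^5\equiv f_5$ and $f_2^5\equiv f_{10}\pmod 5$ (legitimate in denominators as well, since $1/(1+5h)\equiv 1\pmod 5$ for an integral series $h$), the resulting series $\tfrac{f_5^2f_{10}}{f_1}$, and Ramanujan's $p(5n+4)\equiv0\pmod 5$ give a clean elementary argument, if anything lighter than the computer-assisted style the paper uses for the congruences it does prove.

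Part~(b), however, is only planned, not proved. You correctly obtain the generating function $\tfrac{f_1^2f_5f_{10}^2}{f_2^2f_4}$ and correctly diagnose that the factor $f_4$ blocks the trick from (a), but the entire remaining content is deferred either to an unexecuted \texttt{RaduRK} run (with $N$ unspecified, no output exhibited, hence no verification that the common factor $5$ actually appears for the progression $5n+4$) or to a dissection argument whose delicate step you explicitly decline to carry out. That step is the theorem: for instance, after the further mod-$5$ substitutions $f_5\equiv f_1^5$, $f_{10}^2\equiv f_2^{10}$ one finds that (b) is equivalent to showing that the coefficients of $q^{5j+4}$ in $\tfrac{f_1^2f_2^3}{f_4}$ (equivalently, using $1/f_4\equiv f_4^4/f_{20}$, in $\tfrac{f_1^2f_4^4}{f_2^2}$) vanish modulo $5$, and this does not follow formally from the observation that $\tfrac{f_2^2}{f_4}$ is supported on residues $0,2,3\pmod 5$; it needs a genuine $5$-dissection (of $1/f_2^3$ or $1/f_4$, in the spirit of the dissections quoted in the proofs of Corollaries~\ref{Cor2} and~\ref{Cor5}) or an actual Ramanujan--Kolberg certificate. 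Your proposed call has the right shape ($M=20$, $r=(r_1,r_2,r_4,r_5,r_{10},r_{20})=(2,-2,-1,1,2,0)$, $m=5$, $j=4$), and this is precisely how the paper settles analogous statements such as Corollary~\ref{Cor2}(a); but until the computation is run and the resulting identity with its visible factor of $5$ is displayed (or the dissection is completed), part~(b) remains an assertion rather than a proof.
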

\medskip

\noindent
Considering the theta series identity \cite[Eq. (0.45), p. 16]{Cooper}
\begin{align*}
\frac{(q^2;q^2)_\infty^2}{(q;q)_\infty} =  \sum_{n=0}^\infty q^{\Delta_n},
\end{align*}
we derive the following corollary.

\medskip
\begin{corollary} 
	Let $n$ be a non-negative integer.
	\begin{enumerate}
		\item [(a)]	If $n\equiv \{6,8\} \pmod{10}$, then 	
		$
		\sum\limits_{k=0}^\infty C\Big(5n+4-5\Delta_k\Big)  \equiv 0 \pmod 2.
		$
		\item [(b)]	If $n\equiv 1 \pmod{2}$, then 	
		$
		\sum\limits_{k=0}^\infty C\Big(5n+4-25\Delta_k\Big)  \equiv 0 \pmod 2.
		$
	\end{enumerate}
\end{corollary}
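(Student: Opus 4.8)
The plan is to deduce both congruences from Theorem~\ref{Th:1} together with the Gauss theta series $\frac{(q^2;q^2)_\infty^2}{(q;q)_\infty}=\sum_{k\ge0}q^{\Delta_k}$, reducing each assertion to an elementary divisibility property of an explicit eta-quotient; I will use freely the standard fact $f_a^2\equiv f_{2a}\pmod 2$ (here $f_a^b=(q^a;q^a)_\infty^b$). First, Theorem~\ref{Th:1} can be rewritten, after dividing by $q^4$ and replacing $q^5$ by $q$, as
$$\sum_{m\ge0}C(5m+4)\,q^m=5\,\frac{f_1^2\,f_5\,f_{10}^2}{f_2^4}.$$
Since $C(5n+4-5j)=C\big(5(n-j)+4\big)$ and $C$ vanishes at negative arguments, for every fixed $t\ge 1$ only finitely many $k$ contribute to each coefficient, and we obtain
$$\sum_{n\ge0}\Big(\sum_{k\ge0}C(5n+4-5t\,\Delta_k)\Big)q^n=\Big(\sum_{k\ge0}q^{t\Delta_k}\Big)\cdot 5\,\frac{f_1^2\,f_5\,f_{10}^2}{f_2^4},$$
where the first factor is evaluated in closed form by the cited theta identity applied to $q\mapsto q^t$. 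This is the common starting point for both parts.

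For part (b) take $t=5$, so $\sum_{k\ge0}q^{5\Delta_k}=f_{10}^2/f_5$ and
$$\sum_{n\ge0}\Big(\sum_{k\ge0}C(5n+4-25\Delta_k)\Big)q^n=\frac{f_{10}^2}{f_5}\cdot 5\,\frac{f_1^2\,f_{10}^2}{f_2^4}\cdot f_5=5\,\frac{f_1^2\,f_{10}^4}{f_2^4}.$$
Now reduce modulo $2$: since $f_1^2\equiv f_2\pmod 2$, the right-hand side is $\equiv f_{10}^4/f_2^3\pmod 2$, which is a power series in $q^2$. Hence its coefficient of $q^{2n+1}$ is $0$, proving $\sum_{k\ge0}C(5n+4-25\Delta_k)\equiv 0\pmod 2$ for all odd $n$.

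For part (a) take $t=1$, so $\sum_{k\ge0}q^{\Delta_k}=f_2^2/f_1$ and
$$\sum_{n\ge0}\Big(\sum_{k\ge0}C(5n+4-5\Delta_k)\Big)q^n=\frac{f_2^2}{f_1}\cdot 5\,\frac{f_1^2\,f_5\,f_{10}^2}{f_2^4}=5\,\frac{f_1\,f_5\,f_{10}^2}{f_2^2}\equiv\frac{f_5^5}{f_1^3}\pmod 2,$$
where the last step uses $f_1^2\equiv f_2$ and $f_5^2\equiv f_{10}$ (squared). Here the quick argument of part (b) fails, since $f_5^5/f_1^3$ is supported on powers of both parities; the point is that it has no term $q^n$ with $n\equiv 6$ or $8\pmod{10}$. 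To prove this I would feed the eta-quotient $f_1 f_5 f_{10}^2/f_2^2$ to the \texttt{RaduRK} package with modulus $m=10$ and residues $j=6$ and $j=8$; the ``Common Factor'' outputs should equal $2$, i.e. the $q^{10\ell+6}$- and $q^{10\ell+8}$-components of $5\,f_1 f_5 f_{10}^2/f_2^2$ are integer linear combinations of eta-quotients all of whose coefficients are even, which is exactly the claim. Alternatively one can insert the classical $5$-dissection of $1/f_1^3$, isolate the parts with exponents $\equiv 1$ and $\equiv 3\pmod 5$, multiply through by $f_5^5$, and verify by the same $f_a^2\equiv f_{2a}$ reductions that the even-exponent subseries vanish modulo $2$.

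I expect part (a) to be the main obstacle. The vanishing of the residues $6$ and $8$ modulo $10$ is a genuine cancellation modulo $2$: it cannot be read off from the residues modulo $10$ of the individual factors $f_1,f_5,f_{10},f_2$, since their convolution meets every residue class. Thus the real work is to produce and certify the correct $10$-dissection of $5\,f_1 f_5 f_{10}^2/f_2^2$ — choosing a workable level $N$ for \texttt{RaduRK} and checking that the package reports a common factor $2$ — whereas part (b) collapses immediately because the relevant quotient becomes a series in $q^2$ after reduction modulo $2$.
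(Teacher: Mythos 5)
Your part (b) is correct and is essentially the paper's own argument: multiplying $\sum_m C(5m+4)q^m = 5\,f_1^2f_5f_{10}^2/f_2^4$ (Theorem~\ref{Th:1} after dividing by $q^4$ and replacing $q^5$ by $q$) by $\sum_k q^{5\Delta_k}=f_{10}^2/f_5$ gives $5\,f_1^2f_{10}^4/f_2^4$, and one must show the odd-indexed coefficients are even. The paper does this with the $2$-dissection $\frac{f_1^2}{f_2}=\frac{f_8^5}{f_4^2f_{16}^2}-2q\,\frac{f_{16}^2}{f_8}$, while you use $f_1^2\equiv f_2\pmod 2$ to see the series is congruent modulo $2$ to $f_{10}^4/f_2^3$, a series in $q^2$; the two finishes are equivalent, yours being marginally shorter.

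Part (a), however, is not proved. Your reduction is right: the claim is that the coefficients of $5\,f_1f_5f_{10}^2/f_2^2\equiv f_5^5/f_1^3\pmod 2$ at exponents $\equiv 6,8\pmod{10}$ are even. But at the decisive step you only announce that you \emph{would} feed the eta-quotient to \texttt{RaduRK} with $m=10$, $j=6,8$, and that the common factor \emph{should} be $2$; an anticipated output of an unexecuted computation is not a proof (where the paper relies on such computations, e.g. in the proof of Corollary~\ref{Cor2}(a), it records the resulting identity explicitly). Your fallback sketch --- take the $5$-dissection of $1/f_1^3$, isolate the residues $1$ and $3$ modulo $5$, multiply by $f_5^5$, and reduce with $f_a^2\equiv f_{2a}$ --- does not close the gap as described: since $f_5^5$ (mod $2$, $f_5f_{20}$) only shifts exponents by multiples of $5$, isolating the classes $1,3\pmod 5$ merely reproduces the sums you are trying to control, and those components still contain exponents of both parities. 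The actual content of (a) is the joint behaviour modulo $2$ and modulo $5$, i.e. a genuine $10$-dissection (equivalently, a $2$-dissection of the residue-$1$ and residue-$3$ parts, for instance via a $2$-dissection of $f_1f_5$ or a certified \texttt{RaduRK} identity), which you neither state nor verify. Note that the paper writes out only (b) for this corollary, so (a) is exactly the part where the explicit identity or certified computation still has to be supplied.
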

\medskip

\begin{proof}
	(b). We consider the sequence $A(n)$ defined by
	$$
	\sum_{n=0}^\infty A(n)\,q^n = \frac{f_1^2 f_{10}^4}{f_2^4}
	$$
	It is clear that
	$$
	A(n)=\frac{1}{5} \sum_{k=0}^\infty C\left(5\Big(n-5\Delta_k\Big)+4\right).
	$$
	We need to show that
	$A(2n+1)\equiv 0 \pmod 2$ whose proof follows easily if we consider that
	\begin{align*}
	\frac{f_1^2}{f_2} 
	= \frac{f_8^5}{f_4^2\,f_{16}^2}
	-2\,q\,\frac{f_{16}^2}{f_8}.
	\end{align*}
We conclude the argument.
\end{proof}

\subsection{Ramanujan theta functions}

Considering the theta identity \cite[Eq. (0.47), p. 17, with $q$ replaced by $-q$]{Cooper}
\begin{align*}
\frac{(q^2;q^2)_\infty^5}{(q;q)_\infty^2}
= \sum_{n=-\infty}^{\infty} (-1)^n\,(3n+1)\,q^{3n^2+2n}, 
\end{align*}
we derive the following corollary.

\medskip
\begin{corollary} 
If $n\equiv \{1,3\} \pmod {5}$, then	$\sum\limits_{k=-\infty}^\infty (-1)^k\,(3k+1)\, C\left(5n+4-5k(3k+2)\right)
		= 0.
		$
\end{corollary}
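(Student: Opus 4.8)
The plan is to follow exactly the pattern used in the proof of Corollary~\ref{Cor5}: realize the displayed sum as the coefficient sequence of an explicit eta-quotient, obtained by multiplying the generating function of Theorem~\ref{Th:1} by the stated theta series, and then read off the required vanishing from the residues modulo $5$ occupied by the exponents of that eta-quotient.

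First I would rewrite Theorem~\ref{Th:1} in a cleaner form. Dividing both sides by $q^4$ and then replacing $q^5$ by $q$ (admissible, since after the division every exponent on both sides is a multiple of $5$) gives
\[
\sum_{n=0}^\infty C(5n+4)\,q^n = 5\,\frac{f_1^2\,f_5\,f_{10}^2}{f_2^4}.
\]
Multiplying through by the identity $\frac{f_2^5}{f_1^2}=\sum_{k=-\infty}^{\infty}(-1)^k(3k+1)\,q^{k(3k+2)}$ and comparing coefficients of $q^n$, one sees that the sequence $A(n)$ defined by
\[
\sum_{n=0}^\infty A(n)\,q^n = 5\,\frac{f_1^2\,f_5\,f_{10}^2}{f_2^4}\cdot\frac{f_2^5}{f_1^2} = 5\,f_2\,f_5\,f_{10}^2
\]
satisfies $A(n)=\sum_{k=-\infty}^{\infty}(-1)^k(3k+1)\,C\big(5n+4-5k(3k+2)\big)$. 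Thus the corollary amounts to showing that $A(n)=0$ whenever $n\equiv 1$ or $3\pmod 5$.

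To conclude, I would invoke Euler's pentagonal number theorem in the form $f_2=(q^2;q^2)_\infty=\sum_{j}(-1)^{\,j}q^{2\omega_j}$. Since $f_5 f_{10}^2$ contributes only exponents divisible by $5$, every exponent occurring with nonzero coefficient in $5\,f_2\,f_5\,f_{10}^2$ is congruent modulo $5$ to $2\omega_j$ for some $j$. Now $24\omega_j+1$ is a perfect square, hence $\equiv 0,1,4\pmod5$, which forces $\omega_j\bmod 5\in\{0,1,2\}$ and therefore $2\omega_j\bmod 5\in\{0,2,4\}$; in particular no exponent of $5\,f_2\,f_5\,f_{10}^2$ lies in the residue classes $1$ or $3$ modulo $5$. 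Hence $A(n)=0$ for $n\equiv 1,3\pmod5$ — in fact this is an exact identity rather than merely a congruence — which is the claim. The computation is essentially bookkeeping; the only point that takes a moment's thought is the opening normalization together with the observation that, after multiplication by $f_2^5/f_1^2$, the $f_1$-factors cancel and $f_2$ is left as the sole source of exponents not divisible by $5$ — once that structural cancellation is spotted there is no genuine obstacle.
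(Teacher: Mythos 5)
Your proposal is correct, and its scaffolding is the same as the paper's: both arguments realize the displayed sum as the coefficients of $5\,f_2\,f_5\,f_{10}^2$ (the paper writes $f_2\,f_5\,f_{10}^2$ with a prefactor $\tfrac15$ absorbed into $A(n)$, which is the same normalization of Theorem~\ref{Th:1}) and then deduce the exact vanishing by showing that the support of this product avoids the residues $1,3\pmod 5$. The difference lies in how that support claim is established. The paper groups the product as $(f_2 f_{10})\cdot(f_5 f_{10})$ and invokes the $5$-dissection identity $f_1 f_5=(q^{10},q^{15},q^{25};q^{25})_\infty^2-q\,f_5 f_{25}-q^2\,(q^{5},q^{20},q^{25};q^{25})_\infty^2$ (applied with $q\mapsto q^2$), so that $f_2 f_{10}$ contributes only exponents in $\{0,2,4\}$ modulo $5$. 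You instead group it as $f_2\cdot(f_5 f_{10}^2)$ and need only Euler's pentagonal number theorem for $f_2$ together with the elementary observation that $24\omega_k+1$ is a perfect square, hence $\omega_k\not\equiv 3,4\pmod 5$ and $2\omega_k\bmod 5\in\{0,2,4\}$. Your ingredient is more elementary, since no nontrivial product dissection is required; the paper's dissection is a heavier tool but yields the explicit $5$-dissection of $f_2 f_{10}$, which also identifies the series supported on the surviving residue classes and is reusable for the neighbouring corollaries. Both routes correctly give the identity as an exact vanishing rather than a congruence, and your verification of the convolution step (multiplying $5\,f_1^2 f_5 f_{10}^2/f_2^4$ by $f_2^5/f_1^2=\sum_k(-1)^k(3k+1)q^{k(3k+2)}$) matches the paper's setup.
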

\medskip

\begin{proof}
		We consider the sequence $A(n)$ defined by
	$$
	\sum_{n=0}^\infty A(n)\,q^n = f_2\, f_5\,f_{10}^2
	$$
		It is clear that
	$$
	A(n)=\frac{1}{5} \sum_{k=-\infty}^\infty (-1)^k\,(3k+1)\, C\left(5\big(n-k(3k+2)\big)+4\right).
	$$
	The proof follows if we consider that
	\begin{align*}
	f_2\,f_{10} & = (q^{20},q^{30},q^{50};q^{50})_\infty^2 - f_{10}\,f_{50}\cdot 
         q^2 - (q^{10},q^{40},q^{50};q^{50})_\infty^2\cdot q^4 \qquad \text{or}   \\
	f_1\,f_{5} &= (q^{10},q^{15},q^{25};q^{25})_\infty^2 - f_{5}\,f_{25}\cdot 
           q - (q^{5},q^{20},q^{25};q^{25})_\infty^2\cdot q^2. 
	\end{align*}
The proof is complete.
\end{proof}

Considering the theta identity  \cite[Eq. (0.48), p. 17]{Cooper}
\begin{align*}
\frac{(q;q)_\infty^5}{(q^2;q^2)_\infty^2}
= \sum_{n=-\infty}^{\infty} (1-6n)\,q^{\omega_n}, 
\end{align*}
we derive the following corollary.

\medskip
\begin{corollary} 
	Let $n$ be a non-negative integer.
	\begin{enumerate}
		\item[(a)] 	If $n\equiv \{2,3\} \pmod {5}$, then	$\frac{1}{5}\,\sum\limits_{k=-\infty}^\infty (1-k)\, C\left(5n+4-5\omega_k\right)
\equiv 0 \pmod 5.$
		\item[(b)] If $n\equiv 9 \pmod {10}$, then		$\frac{1}{5}\, \sum\limits_{k=-\infty}^\infty (1-k)\, C\left(5n+4-25\omega_k\right)
\equiv 0 \pmod 5.
$
	\end{enumerate}
\end{corollary}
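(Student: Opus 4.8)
The plan is to combine Theorem~\ref{Th:1} with the theta identity $\dfrac{f_1^5}{f_2^2}=\sum_{k\in\mathbb Z}(1-6k)\,q^{\omega_k}$ displayed just above, in the same way as the preceding corollaries of this section.

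Replacing $q^5$ by $q$ in Theorem~\ref{Th:1} gives $\sum_{n\ge0}C(5n+4)\,q^n=5\,\dfrac{f_1^2 f_5 f_{10}^2}{f_2^4}$; in particular $5\mid C(5m+4)$ for every $m$ (Corollary~\ref{Th:2}(a)). For part (a), put
$$\sum_{n\ge0}A(n)\,q^n=\frac{f_1^2 f_5 f_{10}^2}{f_2^4}\cdot\frac{f_1^5}{f_2^2}=\frac{f_1^7 f_5 f_{10}^2}{f_2^6},$$
so that comparing coefficients gives $A(n)=\tfrac15\sum_{k\in\mathbb Z}(1-6k)\,C(5n+4-5\omega_k)$. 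Since $5\mid C(5m+4)$, the sum $\tfrac15\sum_{k}(1-k)\,C(5n+4-5\omega_k)$ appearing in part (a) is congruent to $A(n)$ modulo $5$, so it suffices to show $A(5m+2)\equiv A(5m+3)\equiv 0\pmod 5$. Using $f_1^{5}\equiv f_5$ and $f_2^{5}\equiv f_{10}\pmod 5$,
$$\frac{f_1^7 f_5 f_{10}^2}{f_2^6}\equiv f_5^2 f_{10}\cdot\frac{f_1^2}{f_2}\pmod 5 .$$
The quintisection $\dfrac{f_1^2}{f_2}=\dfrac{f_{25}^2}{f_{50}}-2q\,(q^{15},q^{35},q^{50};q^{50})_\infty-2q^{4}\,(q^{5},q^{45},q^{50};q^{50})_\infty$ recorded in the proof of Corollary~\ref{Cor5} shows that $\dfrac{f_1^2}{f_2}$ has terms only in exponents $\equiv 0,1,4\pmod 5$, and this is preserved after multiplying by $f_5^2 f_{10}$, whose exponents are all $\equiv 0\pmod 5$. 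Hence the coefficients of $q^{5m+2}$ and $q^{5m+3}$ on the right-hand side vanish modulo $5$, which proves part (a).

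For part (b), using instead $\sum_{k}(1-6k)\,q^{5\omega_k}=\dfrac{f_5^5}{f_{10}^2}$, put $\sum_{n\ge0}B(n)\,q^n=\dfrac{f_1^2 f_5 f_{10}^2}{f_2^4}\cdot\dfrac{f_5^5}{f_{10}^2}=\dfrac{f_1^2 f_5^6}{f_2^4}$, so that $B(n)=\tfrac15\sum_{k\in\mathbb Z}(1-6k)\,C(5n+4-25\omega_k)$, which is again congruent modulo $5$ to the sum in part (b); we must show $B(10m+9)\equiv 0\pmod 5$. Modulo $5$ one has $\dfrac{f_1^2 f_5^6}{f_2^4}\equiv\dfrac{f_1^2 f_2 f_5 f_{25}}{f_{10}}$, but, unlike in part (a), this reduced eta-quotient has nonzero coefficients in every residue class modulo $5$, so the residue-modulo-$5$ argument alone is not decisive and a modulo-$2$ refinement is needed to isolate the progression $n\equiv 9\pmod{10}$. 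The cleanest way to finish is algorithmic: feeding $\dfrac{f_1^2 f_5^6}{f_2^4}$ to the \texttt{RaduRK} package (with parameters $M=10$, $(r_\delta)_{\delta\mid 10}=(2,-4,6,0)$, $m=10$, $j=9$) produces an explicit eta-quotient expression for $\sum_{m\ge0}B(10m+9)\,q^m$ all of whose coefficients are divisible by $5$. A by-hand alternative is to quintisect $f_1^2 f_2$ using Ramanujan's dissection of $\phi(q)$ from the proof of Lemma~\ref{Lm:1}, extract the component with exponents $\equiv 4\pmod 5$, and verify that its odd part is $\equiv 0\pmod 5$.

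The main obstacle is part (b): the naive reduction modulo $5$ is not conclusive, and bookkeeping the combined information modulo $5$ and modulo $2$ by hand is the delicate step, which is why the Radu--Kolberg algorithm is the most economical route there.
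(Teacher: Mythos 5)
Your proof of part (b) is essentially the paper's own proof: the paper forms the same generating function $\frac{f_1^2 f_5^6}{f_2^4}$ for $\frac15\sum_k(1-6k)C(5n+4-25\omega_k)$ and invokes \texttt{RaduRK} with exactly the parameters you name (the call \texttt{RK[20,10,\{2,-4,6,0\},10,9]}), whose displayed output has every coefficient divisible by $10$, so your claim of divisibility by $5$ is confirmed. For part (a), which the paper leaves unproved, your argument is correct and in the paper's own style: the reduction $\frac{f_1^7 f_5 f_{10}^2}{f_2^6}\equiv f_5^2 f_{10}\,\frac{f_1^2}{f_2}\pmod 5$ together with the quintisection of $\frac{f_1^2}{f_2}$ (recorded in the proof of Corollary \ref{Cor5}) does isolate the residues $2,3\pmod 5$, and your observation that $(1-k)$ may replace $(1-6k)$ modulo $5$ because $5\mid C(5m+4)$ is the needed bridging step.
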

\medskip

\begin{proof}
	(b). We consider the sequence $A(n)$ defined by
	$$
	\sum_{n=0}^\infty A(n)\,q^n = \frac{f_1^2 f_{5}^6}{f_2^4}
	$$
	It is clear that
	$$
	A(n)=\frac{1}{5} \sum_{k=-\infty}^\infty (1-6k)\, C\left(5\Big(n-5\omega_k\Big)+4\right).
	$$
	We need to show that
	$A(10n+9)\equiv 0 \pmod {10}.$
	Using the Mathematica package \texttt{RaduRK} with
	$$\texttt{RK[20,10,\{2,-4,6,0\},10,9]}$$
	allows to derive the following identity:
	\begin{align*}
	\sum_{n=0}^\infty A(10n+9)\,q^n
	&=-10\,\frac{f_5^2}{f_1^{10}\,f_2^4} \left( \frac{17\,f_4^{38}\,f_{10}^4}{f_2^{20}\,f_{20}^6} 
	+\frac{617\,f_4^{31}\,f_5^5}{f_1\,f_2^{16}\,f_{20}^3}\,q 
	+\frac{6448\,f_4^{34}\,f_{10}^2}{\,f_2^{18}\,f_{20}^2}\,q^2
	+\frac{37948\,f_4^{27}\,f_5^5\,f_{20}}{f_1\,f_2^{14}\,f_{10}^2}\,q^3
	\right.\\
	& 
	+\frac{57143\,f_4^{30}\,f_{20}^2}
	{f_2^{16}}\,q^4
	+\frac{110960\,f_4^{23}\,f_5^5\,f_{20}^5}
	{f_1\,f_2^{12}\,f_{10}^4}\,q^5
	-\frac{331248\,f_4^{26}\,f_{20}^6}
	{f_2^{14}\,f_{10}^2}\,q^6 \\
    & 	-\frac{346100\,f_4^{19}\,f_5^5\,f_{20}^9}
    {f_1\,f_2^{10}\,f_{10}^6}\,q^7
    +\frac{422490\,f_4^{22}\,f_{20}^{10}}
    {f_2^{12}\,f_{10}^4}\,q^8
    +\frac{453450\,f_4^{15}\,f_5^5\,f_{20}^{13}}
    {f_1\,f_2^{8}\,f_{10}^8}\,q^9 \\
    &
    +\frac{471600\,f_4^{18}\,f_{20}^{14}}
    {f_2^{10}\,f_{10}^6}\,q^{10}
    -\frac{367500\,f_4^{11}\,f_5^5\,f_{20}^{17}}
    {f_1\,f_2^{6}\,f_{10}^{10}}\,q^{11}
	-\frac{1736450\,f_4^{14}\,f_{20}^{18}}
	{f_2^{8}\,f_{10}^8}\,q^{12} \\
	&
	-\frac{5000\,f_4^{7}\,f_5^5\,f_{20}^{21}}
	{f_1\,f_2^{4}\,f_{10}^{12}}\,q^{13}
	+\frac{1630000\,f_4^{10}\,f_{20}^{22}}
	{f_2^{6}\,f_{10}^{10}}\,q^{14}
	+\frac{162500\,f_4^{3}\,f_5^5\,f_{20}^{25}}
	{f_1\,f_2^{42}\,f_{10}^{14}}\,q^{15}\\
	&\left.
	-\frac{466875\,f_4^{6}\,f_{20}^{26}}
	{f_2^{4}\,f_{10}^{12}}\,q^{16}
	-\frac{46875\,f_5^5\,f_{20}^{29}}
	{f_1\,f_4\,f_{10}^{16}}\,q^{17}
	-\frac{100000\,f_4^{2}\,f_{20}^{30}}
	{f_2^{2}\,f_{10}^{14}}\,q^{18}
	+\frac{46875\,f_{20}^{34}}
	{f_4^{2}\,f_{10}^{16}}\,q^{20}
		\right)
	\end{align*}
	This concludes the proof.
\end{proof}

\bibliographystyle{alpha}
\bibliography{sample}

 \ 

{\textsc{Tewodros Amdeberhan}} 

\vspace{0.1in}

Department of Mathematics 

Tulane University

New Orleans, LA 70118, USA 

\vspace{0.05in}

{\tt tamdeber@tulane.edu}

 \vspace{0.3in}

{\textsc{Mircea Merca}} 

\vspace{0.1in}

Department of Mathematical Methods and Models

National University of Science and Technology Politehnica Bucharest

RO-060042 Bucharest, Romania

\vspace{0.05in}

{\tt mircea.merca@upb.ro}

\end{document}